\newtheorem{lemme}{Lemma}
\newtheorem{proposition}{Proposition}
\newtheorem{theorem}{Theorem}
\newtheorem*{theorem*}{Theorem}
\newtheorem{remark}{Remarks}
\DeclareMathOperator{\dive}{div}
\DeclareMathOperator{\sgn}{sgn}
\begin{document}
	\title{$L^1$-stability of periodic stationary solutions of scalar convection-diffusion equations}
	\author{Val\'erie Le Blanc\footnote{Universit\'e de Lyon; Universit\'e Lyon 1; INSA de Lyon, F-69621; Ecole Centrale de Lyon; CNRS, UMR5208, Institut Camille Jordan; 43, boulevard du 11 novembre 1918; F-69622 Villeurbanne-Cedex, France. E-mail address: leblanc@math.univ-lyon1.fr.}}
	\maketitle
	
	\begin{abstract}
		The aim of this paper is to study the $L^1$-stability of periodic stationary solutions of scalar convection-diffusion equations. We obtain dispersion in $L^2$ for all space dimensions using Kru\v{z}kov type entropy. And when the space dimension is one, we estimate the number of sign changes of a solution to obtain $L^1$-stability.
	\end{abstract}

	\textbf{Keyword : }$L^1$-stability, periodic stationary solutions, entropy, dispersion inequality, lap
number.

	\section{Introduction}
	
	We study the solutions of a scalar convection-diffusion equation of the form:
	\begin{equation}\label{equation}\partial_{t}u+\dive (f(u,x))=\Delta u,\quad t>0,x\in\mathbb R^d,\end{equation}
	where $x\mapsto f(\cdot,x)$ is an $Y$-periodic function with $Y=\prod_{i=1}^d(0,T_{i})$ the basis of a lattice. We assume that $f$ belongs to $\mathcal C^2(\mathbb R,\mathcal C^1(\mathbb R^d))$. For this equation, periodic stationary solutions $w_p$ exist and are parameterized by their space average $p$: this is a result of Dalibard in \cite{Dalibard}. In this paper, we focus on the $L^1$-stability of these periodic stationary solutions.

	When $f$ only depends on $u$, the periodic stationary solutions are the constants and the $L^1$-stability of the constants is already proved by Freist\"uhler and Serre in the one-dimensional space case in \cite{Freistuhler} and by Serre in all space dimension in \cite{Serre}. We define the space $$L_0^1(\mathbb R^d)=\{u\in L^1(\mathbb R^d) : \int_{\mathbb R^d}u(x)dx=0\}.$$ With this notation, the result can be written as follows:
	\begin{theorem}
		\emph{\cite{Serre}} For all $k\in\mathbb R, b\in L_0^1(\mathbb R^d)\cap L^\infty(\mathbb R^d)$, the unique solution $u\in L^\infty_{loc}(\mathbb R,L^\infty(\mathbb R^d))$ of
		\begin{equation}\label{eqSerre}\left\{\begin{array}{lr}\partial_{t}u+\dive (f(u))=\Delta u,&t>0,x\in\mathbb R^d,\\u(0,x)=k+b(x),&x\in\mathbb R^d,\end{array}\right.\end{equation}
		satisfies:$$\lim_{t\to\infty}\|u(t,\cdot)-k\|_{1}=0.$$
	\end{theorem}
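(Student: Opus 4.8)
The plan is to turn the $L^1$-distance to $k$ into a Lyapunov functional and then use entropy dissipation together with a compactness argument on time-shifts to identify the long-time limit. First I would observe that the constant $k$ is itself a solution of \eqref{eqSerre}, so the Kru\v{z}kov $L^1$-contraction principle shows that $g(t):=\|u(t,\cdot)-k\|_{1}$ is non-increasing; hence $g(t)\to\ell$ for some $\ell\ge0$, and the entire issue is to prove $\ell=0$. In parallel, testing the equation against the smooth convex entropy $\eta(u)=\tfrac12(u-k)^2$ (rigorously on smooth approximations, then passing to the limit) yields the dissipation identity $\frac{d}{dt}\,\tfrac12\|u(t,\cdot)-k\|_{2}^{2}=-\|\nabla u(t,\cdot)\|_{2}^{2}$, so that $\int_{0}^{\infty}\|\nabla u(s,\cdot)\|_{2}^{2}\,ds\le\tfrac12\|b\|_{2}^{2}<\infty$ (note $b\in L^1\cap L^\infty\subset L^2$). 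In particular the dissipation over the time-tail $[t,\infty)$ tends to $0$ as $t\to\infty$.

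Next I would invoke parabolic regularity. By the maximum principle $\|u(t,\cdot)\|_{\infty}\le\|k+b\|_{\infty}$, so the shifted solutions $u_{n}(s,x):=u(t_{n}+s,x)$ are uniformly bounded; choosing $t_{n}\to\infty$ along which the tail dissipation vanishes, and using interior parabolic estimates for the uniformly parabolic equation \eqref{eqSerre} with bounded $f\in\mathcal C^2$, I would extract a subsequence with $u_{n}\to U$ in $C_{loc}(\mathbb R\times\mathbb R^d)$, where $U$ solves the same equation for all $s\in\mathbb R$. The vanishing tail dissipation forces $\nabla U\equiv0$, so $U$ is constant in space; plugging a spatially constant field into \eqref{eqSerre} (where $\dive f(U)=0$ and $\Delta U=0$) gives $\partial_{s}U=0$, hence $U\equiv c_{0}$ is a genuine constant.

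It then remains to identify $c_{0}$ and to transfer this information back to $\ell$, and here the hypothesis $\int b=0$ and the one real difficulty enter. The natural framework is the Kolmogorov--Riesz criterion for relative compactness of $\{u(t,\cdot)-k:t\ge0\}$ in $L^1(\mathbb R^d)$: boundedness is clear, and the equicontinuity $\|u(t,\cdot+h)-u(t,\cdot)\|_{1}\le\|b(\cdot+h)-b(\cdot)\|_{1}\to0$ as $h\to0$, uniformly in $t$, is furnished for free by translation-invariance of \eqref{eqSerre} combined with $L^1$-contraction. The missing ingredient is the uniform-in-time tail bound $\sup_{t}\int_{|x|>R}|u(t,x)-k|\,dx\to0$ as $R\to\infty$, and I expect this tightness to be the main obstacle, since mass can a priori spread diffusively toward spatial infinity. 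I would attack it by reducing, via $L^1$-density and contraction, to compactly supported data $b$ with $\int b=0$, testing the Kru\v{z}kov inequality against a cutoff $\phi_{R}$ to get $\frac{d}{dt}\int|u-k|\phi_{R}\lesssim R^{-1}\int_{\{R<|x|<2R\}}|u-k|$, and controlling the resulting tail growth (equivalently, comparing with the linear heat flow, for which zero average already forces $\|e^{s\Delta}b\|_{1}\to0$).

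Granting tightness, the family $\{u(t,\cdot)-k\}$ is relatively compact in $L^1$, so the $C_{loc}$-limit $U$ is in fact an $L^1$-limit with $\|U(s,\cdot)-k\|_{1}=\lim_{n}g(t_{n}+s)=\ell$. Since $U\equiv c_{0}$ with $U-k\in L^1(\mathbb R^d)$, the constant $c_{0}$ must equal $k$, whence $\ell=\|U-k\|_{1}=0$, which is exactly the desired conclusion $\lim_{t\to\infty}\|u(t,\cdot)-k\|_{1}=0$. Conceptually this is a LaSalle invariance principle: the non-increasing functional $g$ and the entropy dissipation confine the $\omega$-limit set to spatially constant steady states, while the conservation $\int(u-k)\,dx=\int b\,dx=0$ selects the single admissible constant $k$.
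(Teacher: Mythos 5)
Your LaSalle skeleton --- monotonicity of $g(t)=\|u(t,\cdot)-k\|_1$ by Kru\v{z}kov contraction, the $L^2$ dissipation identity, local compactness of time shifts, identification of the $\omega$-limit as a spatial constant --- is sound as far as it goes, but the proof has a genuine gap exactly where you flag it: the uniform-in-time tightness $\sup_{t\geq0}\int_{|x|>R}|u(t,x)-k|\,dx\to0$ as $R\to\infty$ is not proved, and none of the tools you propose can deliver it. Your cutoff computation gives $\frac{d}{dt}\int|u-k|\phi_R\lesssim R^{-1}\|b\|_1$, which upon time integration yields a tail bound of order $\|b\|_1\,t/R$ --- vacuous uniformly in $t$ for any fixed $R$, as it must be, since diffusion spreads mass on the scale $\sqrt t$. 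Worse, tightness cannot follow from any soft argument using only contraction, conservation, dissipation and translation invariance: if it did, your chain of reasoning would apply verbatim to data with $\int b\neq0$ and conclude $\ell=0$, contradicting conservation of mass, which forces $\|u(t)-k\|_1\geq|\int b|>0$. So tightness must secretly encode the zero-mean hypothesis; and even for the linear heat flow, tightness of $\{e^{t\Delta}b\}_{t\geq0}$ for mean-zero $b$ is deduced from the decay $\|e^{t\Delta}b\|_1\to0$, i.e.\ from (the linear case of) the theorem itself --- the reduction you sketch is circular. Without tightness, the $C_{loc}$ limit $U\equiv c_0$ tells you nothing about $\ell$: the scenario $u(t_n)\to k$ locally while the mass $\ell>0$ escapes to spatial infinity is precisely what remains to be excluded.

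By contrast, the proof the paper describes (following Serre, and Freist\"uhler--Serre in one dimension) is quantitative and avoids compactness altogether. The entropy $u\mapsto u^2$ together with $L^1$-contraction and Nash's inequality gives the dispersion bound $\|\tilde S^tb\|_2\leq c_d\|b\|_1t^{-d/4}$; then, after normalizing so that $k=0$, $f(0)=0$ and $f'(0)=0$ (hence $|f(u)|\lesssim|u|^2$), Duhamel's formula $u(t)=K^t*b-\int_0^t\nabla_xK^{t-s}*f(u(s))\,ds$ is estimated in $L^1$: the free term tends to zero precisely because $\int b=0$, while the nonlinear term is controlled via $\|\nabla_xK^{t-s}\|_1\lesssim(t-s)^{-1/2}$ and $\|f(u(s))\|_1\lesssim\|u(s)\|_2^2\lesssim\|b\|_1^2s^{-d/2}$, with a final smallness/continuity argument on $l_0(b)=\lim_{t\to\infty}\|\tilde S^tb\|_1$; in one space dimension one can instead apply Matano's lap-number lemma to the primitive $V(x,t)=\int_{-\infty}^xu(y,t)\,dy$, where the zero mean is what makes $V$ vanish at both infinities so that $\|u(t)\|_1\leq2(m+1)\|V(t)\|_\infty\to0$. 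In effect, the ``comparison with the heat flow'' you mention at the end of your tightness discussion \emph{is} the proof, but as a quantitative Duhamel estimate with the dispersion inequality as input rather than as a compactness device; carrying it out reconstructs Serre's argument and renders the $\omega$-limit machinery unnecessary.
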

	
	The proof of this result can be made in 3 steps. First, the global existence of solution of (\ref{eqSerre}) is proved using the Duhamel's formula with $\dive(f(u))$ as a perturbation of the Heat equation, one obtains : 
	$$u(t)=K^t*u_0+\int_0^t\dive K^{t-s}*f(u(s))ds.$$
	The maximum principle allows to conclude about global existence by induction. This defines the nonlinear semigroup $\tilde S^t$ so that $u(t)=\tilde S^tu_0$ is the solution of (\ref{eqSerre}).
	
	Secondly, one establishes the so-called four ``Co-properties'' for $u_0,v_0$ in $L^\infty(\mathbb R^d)$:
	\begin{enumerate}
		\item Comparison: $u_0\leq v_0$ a.e.$\Rightarrow \tilde S^tu_0\leq\tilde S^tv_0$ a.e.,
		\item Contraction: $v_0-u_0\in L^1(\mathbb R^d)\Rightarrow \tilde S^tv_0-\tilde S^tu_0\in L^1(\mathbb R^d)$ and $$\|\tilde S^tv_0-\tilde S^tu_0\|\leq\|v_0-u_0\|,$$
		\item Conservation (of mass): $v_0-u_0\in L^1(\mathbb R^d)\Rightarrow \tilde S^tv_0-\tilde S^tu_0\in L^1(\mathbb R^d)$ and $$\int_{\mathbb R^d}(\tilde S^tv_0-\tilde S^tu_0)=\int_{\mathbb R^d}(v_0-u_0),$$
		\item Constants: if $u_0$ is a constant, then $\tilde S^tu_0\equiv u_0$.
	\end{enumerate}
	 
	Two methods allow to conclude: one in one space dimension and another one in all space dimension. The first one is due to Freist\"uhler and Serre \cite{Freistuhler}: they study the number of sign changes of the solution. Having assumed that $k=0, f(0)=0$, they study the primitive $V$ of the solution $u$ which vanishes at $-\infty$: $V(x,t)=\int_{-\infty}^xu(y,t)dy$. Since $b\in L^1_0(\mathbb R),$ this primitive also vanishes at $+\infty$ and belongs to $L^\infty(\mathbb R)$. Moreover, $V$ satisfies a parabolic equation $$\partial_tV+f(\partial_xV)=\partial_x^2V.$$ They also apply the lemma of Matano \cite{Matano} on $V$ to estimate the number of sign changes of the derivative of $V$: $u$. Estimates on both $\|u(t)\|_{L^1}$ by $\|V(t)\|_{L^\infty}$ follow. Using $L^2$-estimates on the equations on both $u$ and $V$, one shows that $\displaystyle\lim_{t\to\infty}\|V(t)\|_{L^\infty}=0$, which permits to obtain the theorem.
	
	The second method, due to Serre \cite{Serre}, is based on the Duhamel's formula. A dispersion inequality is obtained using the entropy $u\mapsto u^2$ for equation (\ref{eqSerre}) and $L^1$-contraction, one obtains :
	$$\|\tilde S^tu_0\|_{2}\leq c_{d}\frac{\|u_0\|_{1}}{t^{d/4}}.$$
	Under the rather general assumption that $f(u)$ is bounded by $|u|^2$,we prove $\displaystyle\lim_{t\to\infty}\|\tilde S^tb\|_1=0$ combining dispersion estimate and estimates on the heat kernel.
	
	In this article, we will see how we can adapt some of these arguments to the case where $f$ depends both on $u$ and $x$. We recall that in this case the stationary solutions $w_p$ considered are periodic, parameterized by their space average $p$. 
	
	We obtain one theorem in the one-dimensional space case :
	\begin{theorem}
		For all $p\in\mathbb R, b\in L_0^1(\mathbb R)\cap L^\infty(\mathbb R)$, the unique solution $u$ in $L^\infty_{loc}(\mathbb R,L^\infty(\mathbb R))$ of
		$$\left\{\begin{array}{lr}\partial_{t}u+\dive (f(u,x))=\Delta u,&t>0,x\in\mathbb R,\\u(0,x)=w_{p}+b(x),&x\in\mathbb R,\end{array}\right.$$
		satisfies:
		$$\lim_{t\to\infty}\|u(t,\cdot)-w_{p}\|_{1}=0.$$\label{thm}
	\end{theorem}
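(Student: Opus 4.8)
The plan is to adapt the one-dimensional argument of Freist\"uhler and Serre, replacing the role of the constant by the periodic stationary profile $w_p$. Setting $v=u-w_p$ and $a(x,v)=f(w_p(x)+v,x)-f(w_p(x),x)$, so that $a(x,0)=0$, the stationary equation for $w_p$ cancels the steady part and $v$ solves $\partial_t v+\partial_x(a(x,v))=\partial_x^2 v$ with $v(0,\cdot)=b$. First I would establish the analogues of the four Co-properties for this equation: global existence and uniqueness via Duhamel, the comparison principle, the $L^1$-contraction (so that $t\mapsto\|v(t)\|_1$ is non-increasing and in particular bounded by $\|b\|_1$), conservation of mass (so that $\int_{\mathbb R}v(t,\cdot)=\int_{\mathbb R}b=0$ for all $t$, using $b\in L^1_0$), and the fact that $w_p$ is stationary, i.e.\ $v\equiv 0$ is preserved.

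Next I would pass to the primitive $V(t,x)=\int_{-\infty}^x v(t,y)\,dy$. Because $\int_{\mathbb R}v(t,\cdot)=0$, the function $V(t,\cdot)$ is bounded and vanishes at both $\pm\infty$; integrating the equation for $v$ in $x$ (the flux boundary terms vanish since $v\to 0$ and $a(x,0)=0$) yields the scalar parabolic equation
\[
\partial_t V + a(x,\partial_x V)=\partial_x^2 V .
\]
Two structural facts about $V$ drive the argument. On one hand, since $a(x,0)=0$, at any interior spatial maximum of $V(t,\cdot)$ one has $\partial_x V=0$, hence $\partial_t V=\partial_x^2V\le 0$ there; the symmetric statement holds at minima. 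Thus $\max_x V(t,\cdot)$ is non-increasing and $\min_x V(t,\cdot)$ is non-decreasing, so $\|V(t)\|_\infty$ is non-increasing with some limit $L\ge 0$. On the other hand, I would apply the Sturm--Matano zero-number lemma to $V$ to bound the number $N(t)$ of sign changes of $v=\partial_x V$: it is finite for $t>0$ and non-increasing (the finiteness for the initial data is first obtained for smooth, piecewise-monotone $b$ and extended by density using the $L^1$-contraction). Counting on the at most $N(t)+1$ intervals on which $v$ has constant sign and using $\|V\|_\infty$ on each, I obtain the crucial inequality $\|v(t)\|_1\le 2\,(N(t)+1)\,\|V(t)\|_\infty$, which reduces the theorem to proving $\|V(t)\|_\infty\to 0$.

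The main step, and the main obstacle, is therefore to show $L=0$. In the constant-coefficient case the linear part of the nonlinearity satisfies $\int f'(0)\,\partial_xV\,V=\tfrac12 f'(0)[V^2]_{-\infty}^{+\infty}=0$, so the $L^2$ energy identity $\tfrac12\frac{d}{dt}\|V\|_2^2+\|v\|_2^2=-\int a(x,\partial_xV)\,V$ loses only a higher-order term controlled by $\|V\|_\infty\|v\|_2^2$, forcing $\int_0^\infty\|v\|_2^2<\infty$ and, with the one-dimensional interpolation $\|V\|_\infty^2\le 2\|V\|_2\|v\|_2$, the decay of $\|V\|_\infty$. Here the explicit $x$-dependence produces an extra zeroth-order term $-\tfrac12\int\partial_x[f_u(w_p,x)]\,V^2$ that does not vanish and cannot be absorbed, so a naive energy estimate yields only boundedness, not decay. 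To overcome this I would either (i) replace $v^2$ by a Kru\v{z}kov-type relative entropy adapted to $w_p$, exploiting that $f(w_p,x)-\partial_x w_p$ is constant, whose dissipation is free of the bad term and yields the $L^2$ integrability; or (ii) argue by compactness: using the uniform bounds and parabolic regularity, extract from the time-translates $V(\cdot+t_n,\cdot)$ an entire limit solving the same equation with $\|V_\infty(s)\|_\infty\equiv L$; the lap-number bound together with $\int v=0$ guarantees tightness, so for each $s$ the value $L$ is attained at a finite point, and the strong maximum principle then forces $V_\infty\equiv L$, contradicting its decay at $\pm\infty$ unless $L=0$. Either route closes the argument, giving $\|V(t)\|_\infty\to 0$ and hence $\|v(t)\|_1\to 0$.
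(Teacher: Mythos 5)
Your skeleton coincides with the paper's: normalize $w_p\equiv0$ by the substitution $v=u-w_p$, pass to the primitive $V$, bound its lap number by Matano's lemma (the paper's lemma \ref{matano}), deduce $\|v(t)\|_1\leq 2(m+1)\|V(t)\|_\infty$, and reduce the theorem to $\|V(t)\|_\infty\to0$; the smoothing/density reduction of the initial datum is also as in the paper. But at the crux there is a genuine gap: the interpolation $\|V\|_\infty^2\leq2\|v\|_2\|V\|_2$ is useless without a bound on $\|V(t)\|_2$ uniform in $t$, and neither of your routes supplies one. The paper's device is a \emph{weighted} energy estimate: it constructs a positive $Y$-periodic weight $\theta$ solving $\partial_x(\theta\,\partial_uf(0,x))+\partial_x^2\theta=0$ --- obtained from Dalibard's theorem as $\theta=\partial_p\tilde w_p|_{p=0}$ for the flux-reversed equation $\partial_tw-\partial_x(f(w,x))=\partial_x^2w$ --- and multiplies the $V$-equation by $\theta V$. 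This kills exactly the zeroth-order term $-\tfrac12\int\partial_x[\partial_uf(0,x)]V^2$ you identified, giving $\frac d{dt}\int\theta V^2\leq-\int\theta|\partial_xV|^2$ and hence $\sup_t\|V(t)\|_2<\infty$ (note $\theta\geq c>0$, so the weighted norm is equivalent to $\|\cdot\|_2$). Absorbing the cubic term $-\int\theta V F(\partial_xV,x)$ into the dissipation also requires the smallness $\|b\|_1\sup|\partial_u^2F|\leq1$, which the paper removes afterwards by an induction on $kb/n$ using $L^1$-contraction; this homotopy step is absent from your proposal but would be needed for your route (i) as well, since your own higher-order term is controlled by $\|V\|_\infty\|v\|_2^2$.

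Your route (i) gestures at the right second ingredient but does not construct it. The paper proves, in all dimensions, the dispersion inequality $\|u(t)-w_p\|_2\leq C\|b\|_1t^{-d/4}$ (proposition \ref{propdisp}) using the entropy $\eta(u,x)=\int_0^{p(u,x)}(u-w_q(x))\,dq$, where $p(\cdot,x)$ inverts $q\mapsto w_q(x)$; its dissipation is $\int\partial_pw_\pi|\nabla\pi|^2$ with the uniform lower bound $\partial_pw\geq\alpha>0$ from Dalibard, and a Nash-inequality bootstrap yields the decay, which then combines with the $\|V\|_2$ bound to give $\|V(t)\|_\infty\to0$. Your observation that $f(w_p(x),x)-\partial_xw_p(x)$ is constant in one dimension is true but does not by itself produce such an entropy, and ``$\int_0^\infty\|v\|_2^2<\infty$'' alone would still founder on the missing $\|V\|_2$ bound. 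Route (ii) fails concretely at the tightness claim: a bounded lap number together with $\int v=0$ does not prevent spreading --- the positive and negative humps of $v$ may separate, leaving $V\approx L$ on a growing plateau with lap number fixed, so the entire limit with $\|V_\infty(s)\|_\infty\equiv L$ is perfectly consistent with each $V(t,\cdot)$ vanishing at $\pm\infty$ (the convergence is only locally uniform), and the strong maximum principle yields no contradiction. It is precisely the uniform $L^2$ bound on $V$ --- the piece you are missing --- that excludes $L>0$, since $V\approx L$ on $[-M,M]$ forces $\|V\|_2^2\geq ML^2$ for arbitrarily large $M$.
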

	
	First, we observe that in this theorem we assume $\int_{-\infty}^\infty b(x)dx=0.$ This assumption is necessary because of the conservation of mass: $$\int_{\mathbb R^d}(v-w_p)=\int_{\mathbb R^d}(v_0-w_p)=\int_{\mathbb R^d}b.$$ Actually, we can not have $L^1$-convergence when $\int_{\mathbb R^d}b\neq0.$ But this assumption is not necessary to prove $L^p$-convergence for $1<p\leq2$ and in this case we obtain a rate of convergence $d/2(1-1/p)$.
	
	To prove the theorem, we use results on the nonlinear semigroup and the lemma of Matano, as in \cite{Freistuhler}. The main difference with the proof of Serre and Freist\"uhler (\cite{Serre} \& \cite{Freistuhler}) appears in the proof of $L^2$-estimates for $u$ and its primitive $V$. Since the problem is inhomogeneous, $u\mapsto u^2$ is not an entropy and we have to find a new entropy to prove dispersion inequality. For $V$ the results on periodic stationary solutions of Dalibard permit to prove that $\|V\|_2$ is bounded.
		
	The paper is organized as follows. In section 2, we recall the result obtained by Dalibard in \cite{Dalibard} about the existence of periodic stationary solutions. In section 3, we focus on the existence and the properties of our nonlinear semigroup in all space dimension: comparison principle, contraction in $L^1$, conservation of mass, dispersion inequality. For its existence and its three first properties the proofs are similar to the homogeneous case $f(u,x)=f(u)$, except that the maximum principle does not hold anymore and is replaced by a comparison principle. For the dispersion inequality, we build a new type of Kru\v{z}kov entropy, based on periodic stationary solutions instead of constants. In section 4, we focus on the one-dimensional space case, and prove theorem \ref{thm} using the lemma of Matano about the number of sign changes.
	
	\section{Existence of stationary solutions}
	In this section, we recall the existence result of Dalibard \cite{Dalibard}. When $f$ depends only on $u$, but not on $x$, i.e. when we are in the case studied by Serre in \cite{Serre}, the stationary solutions considered are all the constants. But in our case the constants are not solutions except if $\dive (f(k,x))=0$ for all $x \in\mathbb R^d$. The existence of another class of stationary solutions is proved by Dalibard (see theorem 2 and lemma 6 in \cite{Dalibard}): there exist periodic stationary solutions, indexed by their space average.
	
	In this section, we recall a part of her results for the following equation:
	$$\dive (f(u,x))=\Delta u, x\in\mathbb R^d$$ where $x\mapsto f(\cdot,x)$ is an $Y$-periodic function with $Y=\prod_{i=1}^d(0,T_{i})$ the basis of a lattice.
	We note the space average of a function $u$: $\langle u\rangle_Y=\frac1{|Y|}\int_Yu(x)dx.$
	\begin{theorem}\label{thmdalibard}
		Let $f=f(u,x)\in \mathcal C^2(\mathbb R,\mathcal C^1(\mathbb R^d))$ such that $\partial_uf\in L^\infty(\mathbb R\times Y)$. Assume that there exist $C_{0}>0,$ and $n\in[0,\frac{d+2}{d-2})$ when $d\geq 3$, such that for all $(p,x)\in \mathbb R\times Y$
		$$|\dive f(p,x)|\leq C_{0}(1+|p|^n).$$
		Then for all $p\in\mathbb R$, there exists a unique solution $w(\cdot,p)\in H^1_{per}(Y)$ of
		$$-\Delta w(x,p)+\dive f(w(x,p),x)=0,\text{ such that }\langle w(\cdot,p)\rangle_{Y}=p.$$
		For all $p\in\mathbb R,w(\cdot,p)$ belongs to $W_{per}^{2,q}(Y)$ for all $1<q<\infty$ and for all $R>0$, there exists $C_R>0$ such that
		$$\|w(\cdot,p)\|_{W^{2,q}(Y)}\leq C_{R}\quad\forall p\in\mathbb R,|p|\leq R,$$
		$C_{R}>0$ depending only on $d,Y,C_{0},n,q,p_{0}$ and $R$.
		
		Furthermore, for all $p\in\mathbb R,\partial_{p}w(\cdot,p)\in H_{per}^1(Y)$ is in the kernel of the linear operator
		$$-\Delta+\dive (\partial_uf(w(x,p),x)\cdot)=0,\text{ and }\langle\partial_{p}w\rangle_{Y}=1.$$
		And there exists $\alpha>0$ depending only on $d,Y$ and $\|\partial_uf\|_\infty$ such that $$\partial_{p}w(x,p)>\alpha\text{ for a.e. }(x,p)\in Y\times\mathbb R.$$
		Hence, $$\lim_{p\to+\infty}\inf_Y w(x,p)=+\infty,$$
		$$\lim_{p\to-\infty}\sup_Y w(x,p)=-\infty.$$
	\end{theorem}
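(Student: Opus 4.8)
\emph{Existence and uniqueness.} I would set $w=p+v$ with $\langle v\rangle_Y=0$ and seek $v$ in the mean-zero subspace of $H^1_{per}(Y)$ solving, for every periodic test function $\varphi$, $\int_Y\nabla v\cdot\nabla\varphi\,dx=\int_Y f(p+v,x)\cdot\nabla\varphi\,dx$, the divergence having been integrated by parts (periodicity kills all boundary terms). Existence I would obtain from Leray-Schauder degree applied to the map $v\mapsto(-\Delta)^{-1}\dive f(p+v,\cdot)$ on the mean-zero space, which is compact because $(-\Delta)^{-1}$ is compact and, crucially, because the growth exponent is subcritical: note that $\frac{d+2}{d-2}=2^{*}-1$, so the hypothesis is exactly the classical subcriticality for a right-hand side growing like $|w|^{n}$. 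The one genuinely delicate ingredient is the a priori bound on the homotopy solutions; testing with $\varphi=v$ gives the energy identity $\|\nabla v\|_{L^2}^2=-\int_Y\Psi(p+v,x)\,dx$ with $\Psi(u,x)=\int_0^u\dive f(s,x)\,ds$ (frozen first argument, as in the hypothesis) and $|\Psi(u,x)|\le C_0(|u|+|u|^{n+1})$, but this closes directly only for small $n$, so for general subcritical $n$ I would first secure a preliminary bound on a fixed Lebesgue norm of $v$ and then propagate it up the Sobolev scale. Uniqueness I would deduce from the non-degeneracy established below: the linearization has no nonzero mean-zero kernel element, so the mean-zero problem is locally well posed at each solution, and a continuation argument in $p$ using the a priori bounds turns local into global uniqueness.

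\emph{Regularity and uniform bounds.} Writing $-\Delta w=-\partial_uf(w,x)\cdot\nabla w-\dive f(w,x)$ (last term at the frozen value $w$), the first term lies in $L^2$ since $\partial_uf\in L^\infty$ and $\nabla w\in L^2$, and the second lies in $L^{2^{*}/n}$ with $2^{*}/n>1$. Calder\'on-Zygmund then gives $w\in W^{2,r_0}_{per}(Y)$ with $r_0>1$, improving the integrability of $w$; iterating is precisely the Brezis-Kato bootstrap, which converges to $W^{2,q}_{per}(Y)$ for every $q<\infty$ exactly because $n<2^{*}-1$ (for $d\le2$ the Sobolev embedding makes the hypothesis on $n$ unnecessary). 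Tracking the constants, which depend only on $d,Y,C_0,n,q,\|\partial_uf\|_\infty$ and on the a priori bound, and bounding the latter by $R$ on $|p|\le R$, yields the stated uniform estimate $\|w(\cdot,p)\|_{W^{2,q}(Y)}\le C_R$.

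\emph{The kernel and the key positivity.} The structural heart is the linearization $\mathcal L v=-\Delta v+\dive(\partial_uf(w,x)\,v)$, a compact perturbation of $-\Delta$ and hence Fredholm of index $0$. Its formal adjoint $\mathcal L^{*}=-\Delta-\partial_uf(w,x)\cdot\nabla$ is a non-divergence operator with bounded drift on the torus that annihilates the constants, so $0$ is its principal eigenvalue, simple, with positive eigenfunction $1$; therefore $0$ is also the principal eigenvalue of $\mathcal L$, and $\ker\mathcal L$ is one-dimensional, spanned by a strictly positive principal eigenfunction. Differentiating the equation and the constraint $\langle w\rangle_Y=p$ in $p$ — legitimate by the implicit function theorem, whose transversality holds precisely because that positive eigenfunction has nonzero average — gives $\mathcal L(\partial_pw)=0$ and $\langle\partial_pw\rangle_Y=1$; being a kernel element of positive average, $\partial_pw$ is a positive multiple of the principal eigenfunction, so $\partial_pw>0$.

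\emph{Uniform lower bound, limits, and the main obstacle.} For $\partial_pw>\alpha$ I would invoke the De Giorgi-Nash-Moser Harnack inequality for the nonnegative solution $\partial_pw$ of $\mathcal L\phi=0$: $\max_Y\partial_pw\le C_H\min_Y\partial_pw$, where $C_H$ depends only on $d,Y$ and $\|\partial_uf\|_\infty$ because the ellipticity is that of $-\Delta$ and the single coefficient $\partial_uf(w,x)$ has $L^\infty$ norm at most $\|\partial_uf\|_\infty$ independently of $p$. Since $\langle\partial_pw\rangle_Y=1$ forces $\max_Y\partial_pw\ge1$, this gives $\min_Y\partial_pw\ge1/C_H=:\alpha$. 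The limits then follow by integration: for $p>0$, $w(x,p)\ge w(x,0)+\alpha p$, so $\inf_Y w(\cdot,p)\to+\infty$, and symmetrically $\sup_Y w(\cdot,p)\to-\infty$ as $p\to-\infty$. I expect the two hardest points to be, first, the a priori bound at the subcritical threshold in the existence step, and second — the deeper one — the uniformity in $p$ of the Harnack constant, which is exactly what makes $\alpha$ independent of $p$ and thus drives the two limits.
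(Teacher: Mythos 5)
You should be aware at the outset that the paper you are being compared against contains no proof of this statement: Theorem \ref{thmdalibard} is recalled verbatim from Dalibard (Theorem 2 and Lemma 6 of \cite{Dalibard}), so your attempt can only be judged against the cited source and on its own merits. On the parts that actually carry weight in the present paper, your architecture is the standard one and correct, including in the fine point of constant-dependence: the Fredholm index-zero structure of $\mathcal L=-\Delta+\dive(\partial_uf(w,x)\,\cdot)$, the observation that the adjoint $-\Delta-\partial_uf(w,x)\cdot\nabla$ annihilates constants so that $0$ is the common principal eigenvalue and $\ker\mathcal L$ is spanned by a positive function; the implicit-function-theorem transversality coming from the nonzero average of that kernel element, yielding $\mathcal L(\partial_pw)=0$, $\langle\partial_pw\rangle_Y=1$, $\partial_pw>0$; and above all the Harnack inequality with constant depending only on $d$, $Y$ and $\|\partial_uf\|_\infty$, combined with $\langle\partial_pw\rangle_Y=1$, to get $\partial_pw\geq 1/C_H=\alpha$ — this is exactly where the global hypothesis $\partial_uf\in L^\infty(\mathbb R\times Y)$ enters, consistently with Remark \ref{remark} of the paper, which records that with only local bounds one keeps $\partial_pw>0$ but loses $\alpha$ and the two limits. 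Your energy identity $\|\nabla v\|_{L^2}^2=-\int_Y\Psi(p+v,x)\,dx$ is also correct (it follows from $\dive_x[G(w,x)]=f(w,x)\cdot\nabla w+\Psi(w,x)$ with $G(u,x)=\int_0^uf(s,x)\,ds$ and periodicity), as is the role of $n<2^{*}-1$ in the $W^{2,q}$ bootstrap: $w\in L^{2^*}$ gives $(\dive f)(w,\cdot)\in L^{2^*/n}$ and $2^*/(2^*)'=2^*-1$, which is precisely the threshold.

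There remain two genuine gaps. First, the a priori bound underpinning existence (and the uniform constant $C_R$) is deferred, not proved: as you concede, the energy identity cannot close by itself once $n+1>2$, since Sobolev turns the right-hand side into a term of order $\|\nabla v\|_{L^2}^{n+1}$ which dominates the left-hand side, and the promised ``preliminary bound on a fixed Lebesgue norm'' is never exhibited — this is the crux of the existence step at subcritical growth, and your proof does not contain it. Second, uniqueness by ``local uniqueness plus continuation in $p$'' is not an argument: nondegeneracy at each solution together with a priori bounds is compatible with several disjoint smooth branches lying over the same value of $p$, and you indicate no degree count or connectedness mechanism to exclude this. The fix is cheap and uses only tools you already deployed: if $w_1,w_2$ are two solutions with $\langle w_1\rangle_Y=\langle w_2\rangle_Y$, then $z=w_1-w_2$ satisfies the linear equation $-\Delta z+\dive(b(x)z)=0$ with $b(x)=\int_0^1\partial_uf(\theta w_1+(1-\theta)w_2,x)\,d\theta\in L^\infty$, and by your own principal-eigenvalue analysis the kernel of this operator is spanned by a strictly positive function; since $\langle z\rangle_Y=0$, this forces $z\equiv0$. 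Substituting this two-line argument for the continuation sketch would repair the uniqueness step entirely; the missing a priori estimate, by contrast, is a substantive hole that would have to be filled (e.g.\ by truncation of $f$ together with comparison/Stampacchia-type $L^\infty$ bounds) before the existence claim can be considered proved.
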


	\begin{remark}\label{remark}~
{\em		\begin{itemize}
			\item A consequence of this theorem is that for all $x\in\mathbb R^d$, the application $p\mapsto w(p,x)$ is increasing and bijective from $\mathbb R$ to $\mathbb R$.
			\item In this theorem, we impose the restrictive assumption that $\partial_uf\in L^\infty$ on the whole domain $\mathbb R\times Y$. When $\partial_uf$ belongs only to $L_{loc}^\infty (L^\infty(Y))$, we obtain that $\partial_pw>0$ but we have not the existence of the constant $\alpha$. Hence, we have no result on the limit when $p\to\pm\infty$ of $\inf_Y w(x,p)$ and $\sup_Y w(x,p)$, but we have that the application 
			$$\begin{array}{rcl}\mathbb R&\to&\displaystyle\big]\lim_{p\to+\infty}\inf_Y w(x,p),\lim_{p\to-\infty}\sup_Y w(x,p)\big[\\p&\mapsto&w(p,x)\end{array}$$ is bijective. And we can adapt the result of theorem \ref{thm} in this case : we just have to make the assumption that there exists $p$ such that for all $x\in\mathbb R^d, u_0(x) \in[w(-p,x),w(p,x)]$.
		\end{itemize}}
	\end{remark}

	In the sequel, we use the notation: $w_{p}=w(\cdot,p)$.
	
	\section{The nonlinear semigroup}
	
	In what follows, we focus on the Cauchy problem for equation (\ref{equation}):
	\begin{equation}\label{existence}\left\{\begin{array}{lr}\partial_{t}u+\dive (f(u,x))=\Delta u,& \forall t>0,\forall x\in\mathbb R^d,\\u(0,x)=u_0(x),&x\in\mathbb R^d,\end{array}\right.\end{equation}
	where the initial datum $u_0$ belongs to $L^\infty(\mathbb R^d).$ First, we adapt the approach of Serre \cite{Serre} to prove the existence of solutions and their properties: comparison principle, $L^1$-contraction, conservation of mass. Then, we prove a dispersion inequality, using a new type of entropy based on periodic solutions.
	
	\subsection{Existence of the nonlinear semigroup}
	As in \cite{Serre}, the proof of the existence of solutions is based on Duhamel's formula for heat equation. We also need a comparison principle to replace the maximum principle which is not true here.
	
	Let us write problem (\ref{existence}) in the form:
	\begin{equation}\label{existence2}\left\{\begin{array}{lr}\partial_{t}u-\Delta u=-\dive (f(u,x)),& t>0,x\in\mathbb R^d,\\u(0,x)=u_0(x),&x\in\mathbb R^d.\end{array}\right.\end{equation}
	Here, the heat operator appears in the left handside of (\ref{existence2}), and the right handside is a lower order perturbation. Denote $H^t$ the heat semigroup and $K^t$ its kernel. They are given by:
	$$H^tu_0=K^t*u_0,\quad K^t(x)=\frac1{(2\pi t)^{d/2}}\exp\left(-\frac{\|x\|^2}{4t}\right)$$
	and satisfy the following properties:
	\begin{eqnarray}\label{heatinequality1}\|H^tu_0\|_{p}\leq\|u_0\|_{p},&1\leq p\leq\infty,\\	\label{heatinequality2}\|\nabla_{x}H^tu_0\|_{p}\leq c'_{p}t^{-\frac12}\|u_0\|_{p},&1\leq p\leq\infty, \end{eqnarray}
	\begin{equation}\label{integral}\int_{\mathbb R^d}K^t(x)dx=1,\quad\int_{\mathbb R^d}\nabla_{x}K^t(x)dx=0.\end{equation}
	
	We prove the following result:
	\begin{proposition}\label{local}
		Assume that $f\in\mathcal C^k(\mathbb R,\mathcal C^1(\mathbb R^d)).$ Then for all $a\in L^\infty(\mathbb R^d)$, there exist $T>0$ and a unique solution $u\in L^\infty([0,T]\times\mathbb R^d)$ of (\ref{existence}). Moreover, $u\in \mathcal{C}^k((0,T),\mathcal{C}^\infty(\mathbb{R}^d))$ and $T$ depends only on $\|u_0\|_\infty$.
	\end{proposition}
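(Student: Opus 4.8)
The plan is to prove local existence and uniqueness via a contraction-mapping (fixed-point) argument applied to Duhamel's formula, exactly as indicated by the decomposition (\ref{existence2}). First I would rewrite the Cauchy problem in its mild (integral) form
\begin{equation}\label{duhamelmild}
u(t)=H^t u_0-\int_0^t\nabla_x K^{t-s}*f(u(s),\cdot)\,ds,
\end{equation}
where the divergence has been moved onto the heat kernel so that only first derivatives of $K$ appear, and then define the map $\Phi$ sending a candidate $u$ to the right-hand side of (\ref{duhamelmild}). I would work in the complete metric space $X_{T,M}=\{u\in L^\infty([0,T]\times\mathbb R^d):\ \|u\|_{L^\infty([0,T]\times\mathbb R^d)}\leq M\}$ with $M=2\|u_0\|_\infty$, equipped with the $L^\infty$ norm.

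The core estimates come from the heat-kernel bounds already recorded in the excerpt. By (\ref{heatinequality1}) one has $\|H^t u_0\|_\infty\leq\|u_0\|_\infty$, and by (\ref{heatinequality2}) the gradient term satisfies $\|\nabla_x K^{t-s}*f(u(s),\cdot)\|_\infty\leq c'_\infty(t-s)^{-1/2}\|f(u(s),\cdot)\|_\infty$. Since $u\mapsto f(u,x)$ is continuous and $u$ ranges over the bounded set determined by $M$, the quantity $C_M=\sup_{|v|\leq M,\,x\in\mathbb R^d}|f(v,x)|$ is finite (using $Y$-periodicity of $f$ in $x$ to reduce to a compact set), so the integral term is bounded by $c'_\infty C_M\int_0^t(t-s)^{-1/2}ds=2c'_\infty C_M\,t^{1/2}$. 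The singularity $(t-s)^{-1/2}$ is integrable, which is precisely why the argument works; choosing $T$ small enough that $2c'_\infty C_M\,T^{1/2}\leq\|u_0\|_\infty$ ensures $\Phi$ maps $X_{T,M}$ into itself. The main obstacle is the contraction estimate: here I would use the Lipschitz continuity of $f$ in its first argument on the compact range $|v|\leq M$, with Lipschitz constant $L_M=\sup_{|v|\leq M}\|\partial_u f(v,\cdot)\|_\infty$, to bound $\|f(u(s),\cdot)-f(\tilde u(s),\cdot)\|_\infty\leq L_M\|u(s)-\tilde u(s)\|_\infty$, yielding $\|\Phi u-\Phi\tilde u\|\leq 2c'_\infty L_M\,T^{1/2}\|u-\tilde u\|$; shrinking $T$ further (still depending only on $\|u_0\|_\infty$ through $M$, $C_M$ and $L_M$) makes this a strict contraction. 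The Banach fixed-point theorem then gives a unique $u\in X_{T,M}$.

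Having obtained the mild solution, I would upgrade its regularity by a bootstrap argument. On any interval $[\delta,T]$ with $\delta>0$, the smoothing of the heat semigroup combined with (\ref{duhamelmild}) gives spatial regularity: differentiating the convolution moves derivatives onto the kernel, and each additional spatial derivative of $K^{t-s}$ costs an extra integrable factor $(t-s)^{-1/2}$, so one gains spatial smoothness step by step, with the regularity of $f\in\mathcal C^k(\mathbb R,\mathcal C^1(\mathbb R^d))$ controlling how far this can be pushed and producing the $\mathcal C^k$-in-time, $\mathcal C^\infty$-in-space conclusion stated in the proposition. Since the fixed point was constructed with $M$, $C_M$ and $L_M$ all determined solely by $\|u_0\|_\infty$, the existence time $T$ depends only on $\|u_0\|_\infty$, as claimed. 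The one point that genuinely requires the $x$-dependence to be handled carefully, rather than being a verbatim copy of Serre's constant-coefficient argument, is the uniform control of $f$ and $\partial_u f$ over all of $\mathbb R^d$; this is exactly where the $Y$-periodicity assumption is used, allowing the relevant suprema in $x$ to be taken over the compact fundamental domain $\overline Y$ instead of over unbounded $\mathbb R^d$.
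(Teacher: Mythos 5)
Your proposal follows essentially the same route as the paper's proof: the mild (Duhamel) formulation with the divergence moved onto the heat kernel, a Picard fixed point on the ball of radius $2\|u_0\|_\infty$ in $L^\infty([0,T]\times\mathbb R^d)$ exploiting the integrable $(t-s)^{-1/2}$ singularity, local Lipschitz continuity of $f$ in $u$ uniformly in $x$ for the contraction, and a Duhamel bootstrap for the $\mathcal C^k$-in-time, $\mathcal C^\infty$-in-space regularity. The only difference is cosmetic: you make explicit, via the $Y$-periodicity of $x\mapsto f(\cdot,x)$, the uniform-in-$x$ bounds that the paper simply asserts as ``$f(\cdot,x)$ locally in $L^\infty$ (resp.\ locally Lipschitz), uniformly in $x$'', which is a fine clarification.
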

	\begin{proof}
		We are searching for the mild solution of (\ref{existence}), i.e which verifies the Duhamel's formula:
		$$\begin{array}{rcl}u(t,\cdot)&=&K^t*u_0-\int_{0}^tK^{t-s}*\dive (f(u(s,\cdot),\cdot))ds\\&=&K^t*u_0-\int_{0}^t\nabla_{x}K^{t-s}*f(u(s,\cdot),\cdot)ds.\end{array}$$
		Hence, we search for the solution of (\ref{existence}) as a fixed point of the map
		$$M:u\mapsto\Big(t\mapsto K^t*u_0-\int_{0}^t\nabla_{x}K^{t-s}*f(u(s,\cdot),\cdot)ds\Big).$$
		In order to use Picard's fixed point theorem we need to find a space which is stable by $M$ and where $M$ is contractant. Using (\ref{heatinequality1})-(\ref{heatinequality2}) with $p=\infty$ we have the following estimate for all $u\in L^\infty(\mathbb R^d)$:
		$$\|Mu(t)\|_\infty\leq \|u_0\|_{\infty}+\int_{0}^t\frac{c'_{\infty}}{(t-s)^{\frac12}}\|f(u(s,\cdot),\cdot)\|_{\infty}ds.$$
		We assume that for all $0\leq~s\leq~T, \|u(s)\|_{\infty}\leq2\|u_0\|_{\infty}$. Since $f(\cdot,x)$ is locally in $L^\infty$, uniformly in $x$, there exists $C$ such that for all $0\leq s\leq T,$ $$\|f(u(s,\cdot),\cdot)\|_{\infty}\leq C$$ where $C$ does not depend on $u$, but only on $\|u\|_{L^\infty((0,t)\times\mathbb R^d)}\leq 2\|u_0\|_{\infty}.$ Therefore, we obtain the following estimate
		$$\|Mu(t)\|_\infty\leq \|u_0\|_{\infty}+2c'_{\infty}C\sqrt T, \quad\forall0\leq t\leq T.$$
		For $T$ sufficiently small ($2c'_{\infty}C\sqrt T<\|u_0\|_{\infty}$), the map $M$ preserves the ball of radius $2\|u_0\|_{\infty}$ of $L^\infty((0,T)\times\mathbb R^d)$. This ball is denoted $B(2\|u_0\|_{\infty})$. Next we prove that $M$ is a contraction: let $u,v \in B(2\|u_0\|_{\infty})$, then
		$$Mv(t)-Mu(t)=\int_{0}^t\nabla_{x}K^{t-s}*(f(u(s,\cdot),\cdot)-f(v(s,\cdot),\cdot))ds.$$
		Since $f(\cdot,x)$ is locally Lipschitz, uniformly in $x$, there exists $C'$ (depending on $2\|u_0\|_{\infty}$) such that $\|f(u,\cdot)-f(v,\cdot)\|_{\infty}\leq C'\|u-v\|_{\infty}$. Hence, we obtain
		$$\|Mu-Mv\|_\infty\leq 2c'_{\infty}C'\sqrt T\|u-v\|_{\infty}$$
		and for $T$ small enough, the map $M$ is stable and contractant on $B(2\|u_0\|_{\infty}).$\\
		We can now use Picard's fixed point theorem to obtain a unique local solution in $L^\infty([0,T]\times\mathbb R^d)$. Moreover, using again Duhamel's formula, we prove that this solution is regular in time if $f$ is regular in $u$ and $x$ ; for instance $u$ is in $\mathcal C^k((0,T),\mathcal C^\infty(\mathbb R^d))$ if $f$ is in $C^k(\mathbb R,\mathcal C^1(\mathbb R^d))$.	
	\end{proof}
		
	To prove global existence in homogeneous problem, one uses maximum principle. When the problem is inhomogeneous, this maximum principle is false and one uses a comparison principle:
	\begin{lemme}\emph{Comparison principle:}\label{comparison}
		Let $u,v\in L^\infty([0,T]\times\mathbb R^d)$ two solutions of (\ref{equation}) on $(0,T)$ such that for all $x\in\mathbb R^d,u_0(x)\leq v_0(x)$. Then for all $t\in[0,T]$, and $x\in\mathbb R^d$, we have $u(t,x)\leq v(t,x).$
	\end{lemme}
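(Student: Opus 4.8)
The plan is to prove the comparison principle $u \le v$ by establishing an $L^1$-contraction-type estimate on the positive part $(u-v)^+$ and showing it must vanish. Since both $u$ and $v$ solve the same equation $\partial_t w - \Delta w = -\dive(f(w,x))$, the difference $z = u - v$ satisfies, upon subtracting the two Duhamel formulas,
\begin{equation}
z(t,\cdot) = K^t * z_0 - \int_0^t \nabla_x K^{t-s} * \bigl(f(u(s,\cdot),\cdot) - f(v(s,\cdot),\cdot)\bigr)\,ds,
\end{equation}
with $z_0 = u_0 - v_0 \le 0$. The difficulty is that the integral term, through the nonlinear flux, is not sign-definite, so a naive pointwise argument on the Duhamel formula fails; the gradient of the heat kernel has no definite sign and the perturbation can move mass across the zero set of $z$.

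The approach I would take is the standard Kru\v{z}kov/doubling-free weak-formulation method adapted to the regularized semigroup. First I would use Proposition~\ref{local}, which guarantees that on $(0,T)$ the solutions $u,v$ are smooth (in $\mathcal C^k((0,T),\mathcal C^\infty(\mathbb R^d))$), so I may work with the strong form of the equation rather than only the mild form. Writing $a(s,x) = \int_0^1 \partial_u f\bigl(v + \theta(u-v),x\bigr)\,d\theta$, the difference $z$ satisfies a genuine linear parabolic equation
\begin{equation}
\partial_t z - \Delta z + \dive\bigl(a(s,x)\, z\bigr) = 0,
\end{equation}
where $a \in L^\infty$ because $\partial_u f \in L^\infty$ on the relevant range and $u,v$ are bounded. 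The heart of the argument is then to test this equation against a smooth approximation $\sgn_\varepsilon(z^+)$ of the indicator of $\{z>0\}$ (equivalently, to differentiate $\int \beta_\varepsilon(z)$ for a convex regularization $\beta_\varepsilon$ of $s \mapsto s^+$), integrate over $\mathbb R^d$, and let $\varepsilon \to 0$.

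The key steps, in order, are: (i) multiply by $\beta_\varepsilon'(z)$ and integrate in space; (ii) the diffusion term $-\int \Delta z\,\beta_\varepsilon'(z) = \int \beta_\varepsilon''(z)|\nabla z|^2 \ge 0$ gives a favorable sign and is discarded; (iii) the convection term $\int \dive(a z)\,\beta_\varepsilon'(z)\,dx$ is rewritten, after integrating by parts, so that the $\varepsilon \to 0$ limit produces a term controlled by $\|\dive_x a\|_\infty$ times $\int z^+$ (the $x$-dependence of $f$ means $\dive_x a$ does not vanish, which is precisely the inhomogeneous feature absent from Serre's constant case); (iv) assemble a differential inequality $\frac{d}{dt}\int (z(t))^+\,dx \le C \int (z(t))^+\,dx$ with $C$ depending on $\|a\|_\infty$ and its spatial divergence bound on the bounded range of $u,v$. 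Since $\int(z(0))^+ = \int(u_0-v_0)^+ = 0$ by hypothesis, Gronwall's lemma forces $\int(z(t))^+\,dx = 0$ for all $t\in[0,T]$, hence $z \le 0$ a.e., which with continuity gives $u \le v$ everywhere. The main obstacle I anticipate is controlling the convection term's limit rigorously: one must justify that the boundary/decay terms at spatial infinity vanish (using the $L^\infty$ bound and enough integrability of $z$, e.g. working first with $z \in L^1$ via the contraction property or truncating in space with a cutoff whose gradient contribution is sent to zero), and one must keep the constant $C$ uniform on the bounded range where $a$ and $\dive_x a$ live. This is where the hypothesis $\partial_u f \in L^\infty$ and the $\mathcal C^2$ regularity of $f$ enter decisively.
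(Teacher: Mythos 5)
You should know at the outset that the paper states Lemma \ref{comparison} without any proof at all --- it is invoked as a classical parabolic substitute for the maximum principle --- so there is no in-paper argument to compare against, and your task was genuinely to supply the missing proof. The route you chose is the standard and correct one: linearize the difference $z=u-v$ through $a(s,x)=\int_0^1\partial_uf(v+\theta z,x)\,d\theta$, which is legitimate since Proposition \ref{local} makes $u,v$ smooth on $(0,T)$ and bounded, test the resulting linear equation $\partial_tz-\Delta z+\dive(az)=0$ against a convex regularization $\beta_\varepsilon$ of $s\mapsto s^+$, and conclude by Gronwall from $\int z^+(0)=0$. This is precisely the one-sided version of the Kato inequality $\partial_t|u-v|+\dive(\sgn(u-v)(f(u,\cdot)-f(v,\cdot)))\leq\Delta|u-v|$ that the paper itself uses afterwards for the $L^1$-contraction, so your argument fits the paper's toolbox exactly.

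Three points need tightening. First, drop the fallback ``working first with $z\in L^1$ via the contraction property'': in the paper the contraction is proved \emph{after} and \emph{by means of} the comparison principle (it is the comparison principle that yields $w\leq0$ from $\partial_tw\leq\Delta w$, $w(0)=0$), so that route is circular and the cutoff argument must carry the whole weight. Second, a plain compactly supported truncation $\varphi_R$ does not suffice when $z^+$ is merely bounded: the terms $\int z^+\Delta\varphi_R$ and $\int az^+\cdot\nabla\varphi_R$ live on an annulus of volume of order $R^{d}$ with weight derivatives of order $R^{-1}$, and need not vanish; the standard fix is an exponential weight $\varphi$ with $|\nabla\varphi|+|\Delta\varphi|\leq C\varphi$, which closes a weighted Gronwall inequality against $\int z^+(0)\varphi=0$. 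Third, your constant is not quite as you state it: $\dive_xa$ contains $\nabla u$ and $\nabla v$ through $\partial_u^2f$, and parabolic smoothing from Duhamel's formula gives only $\|\nabla u(t)\|_\infty\leq Ct^{-1/2}$ near $t=0$, so $\|\dive_xa\|_\infty$ is not finite uniformly down to the initial time. Either note that this singularity is integrable in time, so the Gronwall argument still forces $\int z^+\varphi\equiv0$; or, better, observe that after your integration by parts the convection term reduces to $\int(z\beta_\varepsilon'(z)-\beta_\varepsilon(z))\,\dive a\,\varphi-\int\beta_\varepsilon(z)\,a\cdot\nabla\varphi$, where the first integrand is pointwise of size $O(\varepsilon)$ (for the usual quadratic regularization) and so disappears as $\varepsilon\to0$, while the second is bounded by $C\|a\|_\infty\int\beta_\varepsilon(z)\varphi$. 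With that cancellation the differential inequality closes with a constant depending only on $\|a\|_\infty$ and the weight, no bound on $\dive_xa$ is needed, and the proof is complete.
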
	
	
	Using this lemma, we then prove global existence of solution:
	\begin{proposition}
		Assume that $f\in\mathcal C^k(\mathbb R,\mathcal C^1(\mathbb R^d)).$ Then for all $u_0\in L^\infty(\mathbb R^d)$, there exists a unique solution $u\in \mathcal{C}^k(\mathbb R,\mathcal{C}^\infty(\mathbb{R}^d))$ of (\ref{existence}).
	\end{proposition}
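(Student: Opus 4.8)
The plan is to bootstrap the local solution of Proposition \ref{local} into a global one by producing a \emph{time-independent} a priori bound in $L^\infty$, with the periodic stationary solutions of Theorem \ref{thmdalibard} playing the role that the constants and the maximum principle play in the homogeneous setting. Since the existence time in Proposition \ref{local} depends only on $\|u_0\|_\infty$, such a bound immediately allows iteration with a fixed time step.

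First I would use Theorem \ref{thmdalibard} to trap the datum between two stationary barriers. Because $\lim_{p\to+\infty}\inf_Y w(x,p)=+\infty$ and $\lim_{p\to-\infty}\sup_Y w(x,p)=-\infty$, and because each $w_p$ is $Y$-periodic and continuous (via the $W^{2,q}_{per}(Y)$ regularity and Sobolev embedding for $q$ large), hence bounded on $\mathbb R^d$, one can choose $p_0>0$ large enough that $\inf w_{p_0}\ge\|u_0\|_\infty$ and $\sup w_{-p_0}\le-\|u_0\|_\infty$, so that
$$w_{-p_0}(x)\le u_0(x)\le w_{p_0}(x)\quad\text{for all }x\in\mathbb R^d.$$
Each $w_{\pm p_0}$ is time-independent and satisfies $\dive f(w_{\pm p_0},x)=\Delta w_{\pm p_0}$, hence is a stationary solution of (\ref{equation}) belonging to $L^\infty([0,T]\times\mathbb R^d)$, so it is an admissible competitor in Lemma \ref{comparison}.

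Next I would apply the comparison principle (Lemma \ref{comparison}) twice, comparing the local solution $u$ with the two barriers, to get
$$w_{-p_0}(x)\le u(t,x)\le w_{p_0}(x)$$
on the whole interval of existence, and therefore the time-independent bound $\|u(t,\cdot)\|_\infty\le M$ with $M:=\max(\|w_{p_0}\|_\infty,\|w_{-p_0}\|_\infty)$, a quantity depending only on $\|u_0\|_\infty$. This is the step I expect to be the main obstacle: it is exactly where the loss of the maximum principle must be compensated, and it relies on the fact that the family $\{w_p\}_{p\in\mathbb R}$ sweeps out all of $\mathbb R$ in the sense of the limits in Theorem \ref{thmdalibard}, so that every bounded datum can be enclosed by two genuine solutions.

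Finally I would iterate the local existence. Proposition \ref{local} produces a solution on $[0,T]$ with $T$ depending only on $\|u_0\|_\infty$; restarting from $u(T,\cdot)$ and using $\|u(T,\cdot)\|_\infty\le M$, the next existence time is bounded below by a fixed $T_\ast=T_\ast(M)>0$. As $M$ is fixed once and for all, each step advances time by at least $T_\ast$, so the solution extends to every $t>0$ (the $\mathbb R$ in the statement being understood as the forward time interval on which (\ref{existence}) is posed). Uniqueness is inherited from the local uniqueness of Proposition \ref{local} on each subinterval, or equivalently from applying Lemma \ref{comparison} symmetrically to two solutions sharing the same datum; and the regularity $u\in\mathcal C^k(\mathcal C^\infty(\mathbb R^d))$ for $t>0$ follows by gluing the smoothness furnished by Proposition \ref{local} on the successive steps.
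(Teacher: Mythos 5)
Your proposal is correct and follows essentially the same route as the paper: trapping $u_0$ between two stationary barriers $w_{-p}$ and $w_{p}$ (possible by Theorem \ref{thmdalibard} since $p\mapsto w_p(x)$ sweeps out all of $\mathbb R$), invoking the comparison principle of Lemma \ref{comparison} to keep the solution between the barriers, and iterating the local existence of Proposition \ref{local} with a uniform time step determined by $\max\{\|w_{-p}\|_\infty,\|w_p\|_\infty\}$. Your added remarks on why the barriers are bounded and on gluing regularity are consistent with, and slightly more explicit than, the paper's argument.
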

	\begin{proof}
		From theorem \ref{thmdalibard} and the remark \ref{remark} we deduce that for all $x$, the application $p\mapsto w_p(x)$  is invertible from $\mathbb R$ to $\mathbb R$. Since $u_0\in L^\infty(\mathbb R^d)$, there exists $p$ such that $w_{-p}(x)\leq u_0(x)\leq w_p(x)$. Proposition \ref{local} gives us $T$ (we can chose $T=T(\max\{\|w_{-p}\|_\infty,\|w_p\|_\infty\})$) and a unique solution $u$. The lemma implies that for all $t\in(0,T)$, and $x\in\mathbb R$, we have $w_{-p}(x)\leq u(t,x)\leq w_p(x)$. Therefore, we can iterate the local existence to prove that $u$ exists on $(0,T),\dots,(kT,(k+1)T)$ for any $k\in\mathbb N$. Finally, we obtain a unique bounded solution, global and smooth for positive time.
	\end{proof}	

	Next, we define the nonlinear semigroup $S^t$ on $L^\infty(\mathbb R^d)$. From now, we will note $u=S^tu_0, v=S^tv_0$ if $u_0,v_0\in L^\infty(\mathbb R^d).$

	As in \cite{Serre}, we have some properties on this semigroup: we have already mentioned the comparison principle (lemma \ref{comparison}). We also have $L^1$-contraction and conservation of mass. And as said above, the constants are no longer stationary solutions: they are replaced by periodic functions.
	\begin{proposition}For all $u_0,v_0\in L^\infty(\mathbb R^d)$ such that $u_0-v_0\in L^1(\mathbb R^d)$, for all $t>0$ we have 
		\begin{enumerate}[i)]
			\item \emph{$L^1$-contraction :} $S^tu_0-S^tv_0\in L^1(\mathbb R^d)$ and $\|S^tu_0-S^tv_0\|_1\leq\|u_0-v_0\|_1;$
			\item \emph{conservation of mass :} $\displaystyle\int_{\mathbb R^d}(S^tu_0-S^tv_0)=\int_{\mathbb R^d}(u_0-v_0).$
		\end{enumerate}
	\end{proposition}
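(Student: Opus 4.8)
The plan is to work directly with the difference $w := S^t u_0 - S^t v_0$, which solves
$$\partial_t w - \Delta w = -\dive\big(f(u,\cdot)-f(v,\cdot)\big), \qquad w(0,\cdot)=u_0-v_0,$$
with $u=S^tu_0,\ v=S^tv_0$. Subtracting the two Duhamel formulas from Proposition \ref{local} gives the representation
$$w(t)=K^t*(u_0-v_0)-\int_0^t\nabla_xK^{t-s}*G(s)\,ds,\qquad G(s):=f(u(s),\cdot)-f(v(s),\cdot).$$
Since $u$ and $v$ take values a.e.\ in a fixed bounded interval (both lie between $w_{-p}$ and $w_p$ by the global existence proof) and $\partial_uf\in L^\infty$, the map $f(\cdot,x)$ is Lipschitz in its first argument uniformly in $x$ on that range, so $\|G(s)\|_1\le L\,\|w(s)\|_1$ for a constant $L$. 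Combining this with the kernel bounds (\ref{heatinequality1})--(\ref{heatinequality2}) at $p=1$ yields
$$\|w(t)\|_1\le\|u_0-v_0\|_1+Lc'_1\int_0^t(t-s)^{-1/2}\|w(s)\|_1\,ds,$$
and a weakly singular Grönwall inequality then shows $w(t)\in L^1(\mathbb R^d)$ for every $t>0$. This already supplies the membership $S^tu_0-S^tv_0\in L^1$ required in both statements, albeit with a non-sharp constant at this stage.

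For the conservation of mass (ii) I would integrate the Duhamel representation over $\mathbb R^d$. The bound $\int_0^t\|\nabla_xK^{t-s}\|_1\|G(s)\|_1\,ds<\infty$ legitimises Fubini, and the moment identities (\ref{integral}), namely $\int_{\mathbb R^d}K^t=1$ and $\int_{\mathbb R^d}\nabla_xK^t=0$, make the convolution against $K^t$ preserve the integral while every term $\nabla_xK^{t-s}*G(s)$ integrates to zero. Hence $\int_{\mathbb R^d}w(t)=\int_{\mathbb R^d}(u_0-v_0)$, which is exactly (ii); note that no delicate decay-at-infinity argument on the flux is needed, the cancellation being encoded in $\int\nabla_xK^t=0$.

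For the $L^1$-contraction (i) I would avoid re-integrating the equation and instead combine the comparison principle (Lemma \ref{comparison}) with the conservation of mass just obtained, in the spirit of the Crandall--Tartar lemma. Set $m_0:=\max(u_0,v_0)\in L^\infty$, and observe that $m_0-u_0=(v_0-u_0)^+$ and $m_0-v_0=(u_0-v_0)^+$ both lie in $L^1\cap L^\infty$. Comparison gives $S^tm_0\ge S^tu_0$ and $S^tm_0\ge S^tv_0$, whence $0\le(S^tu_0-S^tv_0)^+\le S^tm_0-S^tv_0$; integrating and applying (ii) to the pair $(m_0,v_0)$ yields $\int(S^tu_0-S^tv_0)^+\le\int(u_0-v_0)^+$, and symmetrically $\int(S^tv_0-S^tu_0)^+\le\int(v_0-u_0)^+$. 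Adding the two gives $\|S^tu_0-S^tv_0\|_1\le\|u_0-v_0\|_1$.

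The Crandall--Tartar step is elementary once comparison and conservation are in place, so the real work is concentrated in the first two paragraphs. The main obstacle I anticipate is the uniform-in-$x$ Lipschitz control $\|G(s)\|_1\le L\|w(s)\|_1$ together with the Fubini/weakly-singular estimate that underpins both the $L^1$ a priori bound and the interchange of integrals, since it is precisely the identity $\int\nabla_xK^t=0$ applied under the integral sign that produces the mass cancellation. A fully self-contained alternative for (i) would be to multiply the equation for $w$ by a smooth approximation $\phi_\varepsilon$ of $\sgn(w)$ and integrate by parts, the diffusion term contributing $-\int\phi_\varepsilon'(w)|\nabla w|^2\le 0$ and the convection term, supported on $\{|w|\le\varepsilon\}$ where $|G|\le L\varepsilon$ and $\nabla w\to 0$ a.e.\ on $\{w=0\}$, vanishing as $\varepsilon\to 0$; but this route needs extra care because $\{|w|\le\varepsilon\}$ has infinite measure, which is exactly why I prefer the comparison-based argument here.
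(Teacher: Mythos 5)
Your proposal is correct, and for the membership and conservation parts it matches the paper: you subtract the two Duhamel formulas and bound with (\ref{heatinequality1})--(\ref{heatinequality2}) at $p=1$ (the paper closes the estimate on a small time interval and iterates rather than quoting a singular Gr\"onwall lemma, but this is the same argument), and you integrate the Duhamel representation using (\ref{integral}) exactly as the paper does for (ii). Where you genuinely diverge is the contraction itself. The paper derives the Kru\v{z}kov-type inequality $\partial_t|u-v|+\dive(\sgn(u-v)(f(u,\cdot)-f(v,\cdot)))\le\Delta|u-v|$, builds from it the auxiliary function $w$ of (\ref{w}) satisfying $\partial_t w\le\Delta w$ with $w(0)=0$, concludes $w\le0$ by comparison for the heat operator, and integrates in $x$. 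You instead run the Crandall--Tartar argument: with $m_0=\max(u_0,v_0)$, the comparison principle (Lemma \ref{comparison}) gives $0\le(S^tu_0-S^tv_0)^+\le S^tm_0-S^tv_0$, and conservation of mass applied to the pair $(m_0,v_0)$ --- legitimate, since $m_0-v_0=(u_0-v_0)^+\in L^1\cap L^\infty$ and you establish (ii) before (i), so there is no circularity --- yields $\int(S^tu_0-S^tv_0)^+\le\int(u_0-v_0)^+$; adding the symmetric estimate gives (i). Your route is more elementary and more robust: it needs no entropy inequality and no auxiliary supersolution, only order-preservation and mass-conservation, which is precisely the abstract content of the Crandall--Tartar lemma. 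What the paper's computation buys instead is a rehearsal of the entropy machinery that is then seriously exploited in the dispersion proof, where the genuinely new entropy $\eta(u,x)=\int_0^{p(u,x)}(u-w_p(x))\,dp$ built on the periodic stationary solutions replaces $|u-k|$; so the paper's choice has expository value beyond this proposition. Your closing alternative (multiplying by a smoothed $\sgn$) is indeed the delicate step --- the set $\{|w|\le\varepsilon\}$ has infinite measure --- and your main argument rightly avoids it.
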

	\begin{proof}
		Let $u_0,v_0\in L^\infty(\mathbb R^d)$ such that $u_0-v_0\in L^1(\mathbb R^d)$. We first prove that $S^tu_0-S^tv_0\in L^1(\mathbb R^d).$ Using Duhamel's formula, one obtains:
		\begin{equation}\label{relation}v(t)-u(t)=K^t*(v_0-u_0)-\int_{0}^t(\nabla_{x}K^{t-s})*(f(v(s,\cdot),\cdot)-f(u(s,\cdot),\cdot))ds.\end{equation}
		Taking the $L^1$-norm and using estimates (\ref{heatinequality1})-(\ref{heatinequality2}) for $p=1$, we deduce that
		$$\sup_{s\leq t}\|v(s)-u(s)\|_{1}\leq\|v_0-u_0\|_{1}+2c'_{1}C'\sqrt t\sup_{s\leq t}\|v(s)-u(s)\|_{1}.$$
		Hence, for $t$ small enough, $v(s)-u(s)\in L^1(\mathbb R^d),$ for all $0\leq s\leq t$ and by induction it is true for all $t\in \mathbb R^+.$\\
		We now prove the $L^1$-contraction principle. For all $u_0,v_0\in L^\infty(\mathbb R^d)$ one shows that $$\partial_{t}|u-v|+\dive (\sgn (u-v)(f(u,\cdot)-f(v,\cdot)))\leq \Delta |u-v|.$$
		Noting
		\begin{equation}\label{w}w=-K^t*|v_0-u_0|+\int_{0}^t\partial_{x}K^{t-s}*\dive ((f(u,x)-f(v,x))\sgn (u-v))+|u-v|,\end{equation} we easily prove $\partial_{t}w\leq\Delta w$ and $w(0)=0$. Using comparison principle, we have $w\leq0$. We integrate (\ref{w}) according to $x$ to obtain \begin{equation}\label{egint}0\geq\int_{\mathbb R^d}w=-\int_{\mathbb R^d}|v_0-u_0|+\int_{\mathbb R^d}|u-v|.\end{equation} From (\ref{egint}), we deduce the contraction principle.\\
		Let us now prove the conservation of mass. Integrating (\ref{relation}), and using (\ref{integral}) we immediately obtain for all $u_0,v_0\in L^\infty(\mathbb R^d)$: $\partial_{t}\int_{\mathbb R^d}(u-v)=0$ and $$\int_{\mathbb R^d}(u-v)=\int_{\mathbb R^d}(u_0-v_0).$$
	\end{proof}
	
	\subsection{Dispersion inequality}
	
	In this section, we prove the following dispersion inequality for equation (\ref{equation}): 
	\begin{proposition}\label{propdisp}
		Let $R\in\mathbb R.$ There exists $C>0$ so that for all $p\in\mathbb R, b\in L^\infty(\mathbb R^d)\cap L^1(\mathbb R^d)$ such that $w_{-R}\leq w_p+b\leq w_R$, $u(t)=S^t(w_p+b)$ verifies a dispersion inequality:
		\begin{equation}\label{dispersion}\|u(t)-w_p\|_2\leq C_d\frac{\|b\|_1}{t^{d/4}}.\end{equation}
	\end{proposition}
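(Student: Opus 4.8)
The plan is to run Serre's Nash-type scheme, but with the quadratic entropy $u\mapsto u^{2}$ replaced by an entropy measured against the steady state $w_{p}$. Write $v=u-w_{p}$. Since $w_{p}$ is itself a solution of (\ref{equation}) we have $w_{p}=S^{t}w_{p}$, so the $L^{1}$-contraction property established above gives
$$\|v(t)\|_{1}=\|S^{t}(w_{p}+b)-S^{t}w_{p}\|_{1}\leq\|b\|_{1}\qquad(t>0).$$
Moreover, because $w_{-R}\leq w_{p}+b\leq w_{R}$, the comparison principle (Lemma \ref{comparison}) confines $u(t)$ to the fixed band $w_{-R}\leq u(t)\leq w_{R}$; hence $u$ and $w_{p}$ stay in a compact interval $I_{R}$ on which $f$ and its $u$-derivatives up to order two are bounded, while $w_{p}$ enjoys the $W^{2,q}$ bounds of Theorem \ref{thmdalibard}. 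Every constant below then depends only on $R$ and $f$, uniformly in $p$ and $t$.

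Subtracting the stationary identity $\Delta w_{p}=\dive f(w_{p},\cdot)$ from (\ref{equation}) gives $\partial_{t}v=\Delta v-\dive\bigl(f(u,\cdot)-f(w_{p},\cdot)\bigr)$. I introduce the adapted entropy $\eta(u,x)=\tfrac12\bigl(u-w_{p}(x)\bigr)^{2}$ together with the companion flux
$$Q_{i}(u,x)=\int_{w_{p}(x)}^{u}\bigl(s-w_{p}(x)\bigr)\,\partial_{u}f_{i}(s,x)\,ds,$$
so that $\partial_{u}Q_{i}=(u-w_{p})\,\partial_{u}f_{i}$ and $Q_{i}(w_{p},x)=0$. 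Multiplying the equation for $v$ by $\partial_{u}\eta=v$ and reorganizing the spatial derivatives, the terms carrying $\nabla u$ inside $v\,\dive f(u,\cdot)$ are cancelled exactly by the $\partial_{u}Q_{i}\,\partial_{x_{i}}u$ produced by $\dive Q$; using $\Delta w_{p}=\dive f(w_{p},\cdot)$ to rewrite the remaining explicit $x$-derivatives, I expect the pointwise balance
$$\partial_{t}\eta+\dive Q=\Delta\eta-|\nabla v|^{2}+E,$$
where the residual $E=E(u,x)$ collects only explicit-$x$ contributions. Every surviving term there is a difference of $f$-derivatives evaluated at $u$ and at $w_{p}$, so $|E|\leq C\,v^{2}$ pointwise. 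Since $\eta,\,Q\in L^{1}$ (both are $O(v^{2})$), integrating over $\mathbb{R}^{d}$ kills the flux and Laplacian terms and yields
$$\frac{d}{dt}\int_{\mathbb{R}^{d}}\eta=-\,\|\nabla v(t)\|_{2}^{2}+\int_{\mathbb{R}^{d}}E.$$

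To close the estimate I would combine this with Nash's inequality $\|v\|_{2}^{2+4/d}\leq C_{N}\|\nabla v\|_{2}^{2}\,\|v\|_{1}^{4/d}$ and the contraction bound $\|v(t)\|_{1}\leq\|b\|_{1}$. Setting $\phi=\|v\|_{2}^{2}$ this gives $\phi'\leq-2c\,\|b\|_{1}^{-4/d}\phi^{1+2/d}+2\int E$, and if the last integral were absent the elementary comparison for $\phi'\leq-\kappa\phi^{1+2/d}$ integrates, independently of $\phi(0)$, to $\phi(t)\leq C\,\|b\|_{1}^{2}\,t^{-d/2}$, which is exactly (\ref{dispersion}).

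The main obstacle is precisely this residual: the bound $|E|\leq C v^{2}$ only gives $\int E\leq C\phi$, whose presence in the ODE produces a spurious factor $e^{Ct}$ and ruins the clean global-in-time rate. Eliminating it is where the periodic steady states must enter essentially, and is the reason $u\mapsto u^{2}$ cannot serve. I would therefore replace $\tfrac12 v^{2}$ by a weighted entropy $\eta(u,x)=\int_{w_{p}(x)}^{u}(s-w_{p}(x))\,\rho(s,x)\,ds$ whose positive weight $\rho$ is built from the family $w_{q}$, comparable to $(\partial_{p}w)^{-1}$, exploiting that $\partial_{p}w>\alpha>0$ lies in the kernel of the linearized operator $-\Delta+\dive(\partial_{u}f(w_{p},\cdot)\,\cdot)$ from Theorem \ref{thmdalibard}. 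Writing $v=\partial_{p}w\cdot\xi$, this kernel property cancels the zeroth-order contribution in the $\xi$-equation, and the right weight symmetrizes the remaining transport part so that the surviving residual acquires a favourable sign (or vanishes), upgrading the balance to $\frac{d}{dt}\int\eta\leq-c\,\|\nabla v\|_{2}^{2}$. Once this is secured, the Nash/ODE step above produces (\ref{dispersion}), the uniform band $w_{-R}\leq u\leq w_{R}$ ensuring that $\rho$, $\alpha$ and all constants are uniform in $p$ and $t$.
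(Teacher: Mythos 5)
Your first half---the computation with $\eta=\tfrac12(u-w_p)^2$, the identification of a residual $E=O(v^2)$, and the conclusion that it destroys the $t^{-d/2}$ rate---matches the paper's own motivation for abandoning $u\mapsto u^2$. The gap is in the second half, which is precisely the crux of the proposition and is left as a hope: the claim that a weight $\rho$ comparable to $(\partial_pw)^{-1}$, anchored at the \emph{single} steady state $w_p$, makes the surviving residual signed ``or vanish'' is never verified, and your specific candidate $\eta(u,x)=\int_{w_p(x)}^u(s-w_p(x))\rho(s,x)\,ds$ does not achieve it. Its multiplier is $\partial_u\eta=(u-w_p)\,\rho(u,x)$, and the kernel identity for $\partial_pw$ in Theorem \ref{thmdalibard} is a statement about the linearization at $w_p$: in the substitution $v=\partial_pw\,\xi$ it cancels only the part of $f(u,x)-f(w_p,x)$ that is \emph{linear} in $v$. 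The quadratic-and-higher explicit-$x$ contributions survive, are again merely $O(v^2)$ with no sign over the whole band $w_{-R}\leq u\leq w_R$, and reinstate the spurious $e^{Ct}$. Moreover the dissipation you target, $-c\|\nabla v\|_2^2$, is not what this kind of computation produces.

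The paper's entropy is not weighted around one steady state but integrated over the whole family, and this is the missing idea. Normalizing $p=0$, let $p(\cdot,x)$ invert the bijection $q\mapsto w_q(x)$, set $\pi(t,x)=p(u(t,x),x)$, and define
$$\eta(u,x)=\int_0^{p(u,x)}\bigl(u-w_q(x)\bigr)\,dq=\int_{w_0(x)}^u(u-s)\,\partial_up(s,x)\,ds,$$
so your weight $\partial_up=1/\partial_pw$ is exactly right but must sit on the factor $(u-s)$, not $(s-w_0(x))$: then $\partial_u\eta(u,x)=p(u,x)$, a quantity constant in $x$ along every stationary solution, which is the structural reason no residual appears. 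Concretely, when one pulls $\dive$ and $\Delta$ out of the $q$-integral, the flux boundary term at $q=\pi$ vanishes because $f(u,x)-f(w_\pi,x)=0$ (since $w_{\pi(t,x)}(x)=u$), while the boundary term from the Laplacian equals $\partial_pw_\pi\,|\nabla\pi|^2$ by the chain-rule identity $\nabla(u-w_q)|_{q=\pi}=\partial_pw_\pi\nabla\pi$, and it carries the good sign:
$$\partial_t\eta+\dive\Bigl(\int_0^{\pi}\bigl(f(u,x)-f(w_q,x)\bigr)\,dq\Bigr)=\Delta\eta-\partial_pw_\pi\,|\nabla\pi|^2,$$
an \emph{exact} identity with dissipation bounded below by $\alpha|\nabla\pi|^2$. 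The Nash/ODE step must then be run on $\pi$, not on $v$ ($\nabla\pi$ differs from a multiple of $\nabla v$ by the explicit $\nabla_xp$ term): one uses $\|\pi(t)\|_1\leq\alpha^{-1}\|b\|_1$ from $L^1$-contraction together with $\partial_up\leq1/\alpha$, and the two-sided comparison $\alpha\pi^2/2\leq\eta\leq C\pi^2$ on the band to close the differential inequality to $\int\eta\leq C'\|b\|_1^2t^{-d/2}$; finally $|u-w_0|\leq\sup_{|q|\leq R}|\partial_pw_q|\,|\pi|$ converts this into (\ref{dispersion}). Your surrounding architecture (contraction, comparison to confine $u$ to the band, Nash, ODE comparison) is the correct one; what cannot be repaired as written is the cancellation step, which genuinely requires the Kru\v{z}kov-type integration over the whole family $\{w_q\}$ rather than the linearized kernel property at a single $w_p$.
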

This estimate gives convergence in $L^2$ when $u_0-w_0\in L^1(\mathbb R^d)$ and the speed of this convergence. In section \ref{oneD}, we will see how $L^2$-convergence imply $L^1$-convergence in the one dimensional space case.
	
This property is first proved by B\'enilan and Abourjaily in \cite{AbBe} in the case where $f$ does not depend on $x$. When $\tilde S^t$ denotes the semigroup of (\ref{eqSerre}), their result can be written as follows:
	 $$\|\tilde S^tu_0\|_{2}\leq c_{d}\frac{\|u_0\|_{1}}{t^{d/4}}.$$
	  In this case, the proof of the inequality is based on the fact that for all convex function $\eta$, there exists $g$ such that for all $u$, $\eta'(u)\dive (f(u))=\dive (g(u))$, in particular for $\eta(u)=u^2$. This property is false in our case but we still have a dispersion inequality (\ref{dispersion}).
	  	
To prove proposition \ref{propdisp}, we use a new class of entropies. When $f$ does not depend on $x$, an interesting class of entropies is the Kru\v{z}kov entropies $u\mapsto|u-k|$ with $k\in\mathbb R$. Those are convex functions and for all $u$ solution of (\ref{eqSerre}), we have the inequality
	$$\partial_{t}|u-k|+\dive (\sgn (u-k)(f(u)-f(k)))\leq \Delta |u-k|.$$
	This inequality is still true in our case but we do not want to compare our solutions to constants  anymore, because they are not stationary solutions of (\ref{existence}). Hence, we define a new type of entropy, using the stationary solutions $w_p$.
	
	\begin{proof}
		Without loss of generality we assume that $p=0$. We have just said that we need to base our new entropy on the stationary solutions. Theorem \ref{thmdalibard} gives us that for all $p\in\mathbb R$, there exists a unique stationary solution $w_{p}$ under the constraint  $\langle w_{p}\rangle_{Y}=p.$ Following the construction of Kru\v{z}kov entropies, let us consider , for any $p\in\mathbb R$, the function $\eta_p$ such that
		$$\eta_{p}:(x,u)\mapsto\eta_{p}(x,u)=|u-w_{p}(x)|.$$ 
		This application verifies the inequality: 
		$$\partial_{t}\eta_{p}(u(t,x),x)+\dive (\sgn (u-w_{p})(f(u,x)-f(w_{p},x)))\leq \Delta \eta_{p}.$$
		
		In order to define our new entropy $\eta$, we define two auxiliary functions $p(u,x)$ and $\pi(x,t)$. We recall that for all $x\in\mathbb R^d$, the function $p\mapsto w_{p}(x)$ is a bijection from $\mathbb R$ to $\mathbb R$. We note  $p(u,x)$ the inverse of this application. It verifies: $$\forall x\in\mathbb R^d,u\in \mathbb R, w_{p(u,x)}(x)=u.$$ If $u$ is a function defined on $\mathbb R^+\times\mathbb R^d$ such that for all $(t,x)$, we define $\pi(t,x)=p(u(t,x),x).$ One remarks that $-R\leq\pi\leq R$. We can now define our particular entropy $\eta$ as:
		$$\eta(u,x)=\int_{0}^{p(u,x)}(u-w_{p}(x))dp.$$
		This function is non negative. Next, we derive energy estimate on $u$ using this new entropy. Deriving $\eta(u(t,x),x)$ with respect to $t$ and using (\ref{existence}), one obtains
		\begin{equation}\label{deriveta}\partial_{t}(\eta(u(t,x),x))\!=\!\int_{0}^{\pi(t,x)}\!\Delta(u-w_{p})dp-\int_{0}^{\pi(t,x)}\!\dive (f(u,x)-f(w_{p},x))dp.\end{equation}
		The last term of (\ref{deriveta}) is written as:
		$$\int_{0}^{\pi(t,x)}\dive (f(u,x)-f(w_{p},x))dp=\dive \left(\int_{0}^{\pi(t,x)}(f(u,x)-f(w_{p},x))dp\right)$$
		and
		$$\int_{0}^{\pi(t,x)}\Delta(u-w_{p})dp=\Delta(\eta(u(t,x),x))-\nabla\pi\cdot\nabla(u-w_{p})|_{p=\pi(t,x)}.$$
		We then obtain the following partial differential equation:
		\begin{equation}\label{a}\partial_{t}\eta(u)+\dive\!\left(\!\int_{0}^{\pi(t,x)}\!\!\!\!\!\!\!(f(u,x)\!-\!f(w_{p},x))dp\!\right)\!=\!\Delta\eta(u)-\!\nabla\pi\!\cdot\!\nabla(u-w_{p})|_{p=\pi(t,x)}.\end{equation}
		Moreover, we have the equality: \begin{equation}\label{b}0=\nabla(u(t,x)-w_{\pi(t,x)}(x))=\nabla(u-w_{p})|_{p=\pi(t,x)}-\partial_{p}w_{\pi}\cdot\nabla\pi.\end{equation}
		We deduce from (\ref{a}) and (\ref{b}) that $\eta$ satisfies the equation:
		\begin{equation}\label{c}\partial_{t}\eta(u)+\dive \left(\int_{0}^{\pi(t,x)}(f(u,x)-f(w_{p},x))dp\right)=\Delta\eta-\partial_{p}w_{\pi}\cdot|\nabla\pi|^2.\end{equation}
		
		Integrate equation (\ref{c}) in space: we get
		$$\frac d{dt}\int_{\mathbb R^d}\eta(u)(x)dx+\int_{\mathbb R^d}\partial_{p}w_{\pi}|\nabla\pi|^2=0.$$
		Moreover, theorem \ref{thmdalibard} gives us $\partial_{p}w_{\pi}\geq \alpha>0$.
		Using this inequality and Nash inequality (\cite{Taylor}): 
		$$\|\pi\|_{2}\leq c_{d}\|\pi\|_{1}^{(1-\theta)}\|\nabla\pi\|_{2}^\theta\quad\text{ where }\quad\frac1\theta=1+\frac2d,$$
		we obtain:
		\begin{equation}\label{4integrate}\frac d{dt}\int_{\mathbb R^d}\eta(u)(x)dx+C_{d}\frac{\|\pi\|_{2}^{2/\theta}}{\|\pi\|_{1}^{2(1-\theta)/\theta}}\leq0. \end{equation}
		
		Let us now relate $\pi$ with $\eta$:
		$$\eta(u(t,x),x)=\int_{0}^{\pi(t,x)}(u(t,x)-w_{p}(x))dp.$$
		From the estimate
		\begin{equation}\label{upi}\begin{array}{rclcc}|u(t,x)-w_{p}(x)|&=&|w_{\pi(t,x)}(x)-w_{p}(x)|&=&\left|\int_{p}^{\pi(t,x)}\partial_{p}w_{p}(x)dp\right|\\&\leq&|\pi(t,x)|\sup_{p}|\partial_{p}w_{p}|,\end{array}\end{equation}
		we deduce, $$\eta(u(t,x),x)\leq|\pi(t,x)|^2\sup_{p}|\partial_{p}w_{p}|.$$
		Since $\partial_{p}w_{p}$ is locally bounded in $p$, i.e. $\partial_{p}w_{p}(x)\leq C$ for all $x\in\mathbb R^d,$ for all $p\in[-R,R]$, we deduce the inequality:
		\begin{equation}\label{4etapi}\eta(u(t,x),x)\leq C|\pi(t,x)|^2.\end{equation}
		We combine (\ref{4integrate}) and (\ref{4etapi}) to obtain:
		$$\frac d{dt}\left(\int_{\mathbb R^d}\eta(u)(x)dx\right)+C\frac{(\int_{\mathbb R^d}\eta(u)(x)dx)^{1/\theta}}{\|\pi\|_{1}^{2(1-\theta)/\theta}}\leq0.$$
		We have now to overvalue $\|\pi\|_{1}$ uniformly in $t$. Now $$\pi(t,x)=p(u(t,x),x)-p(w_{0}(x),x)=\int_{w_{0}(x)}^{u(t,x)}\partial_{u}p(w,x)dw.$$
		We deduce from the minoration $\partial_{p}w_p\geq \alpha$ the estimate $\partial_{u}p\leq 1/\alpha$ and we deduce:
		$$\|\pi(t)\|_{1}\leq \frac1\alpha	\|u(t)-w_{0}\|_{1}\leq\frac1\alpha\|b\|_{1}$$
		with $L^1$-contraction. Finally, we have the inequation
		\begin{equation}\label{4inedp}\frac d{dt}\left(\int_{\mathbb R^d}\eta(u)(x)dx\right)+\frac{C}{\|b\|_{1}^{2(1-\theta)/\theta}}\left(\int_{\mathbb R^d}\eta(u)(x)dx\right)^{1/\theta}\leq0. \end{equation}
		
		Using $g:=-\left(\int_{\mathbb R^d}\eta(u)(x)dx\right)^{1-1/\theta}$, we solve this inequation and we obtain
		$$g(t)\leq (1-1/\theta)C\frac t{\|b\|_{1}^{2(1-\theta)/\theta}}.$$
		Hence,
		$$\left(\int_{\mathbb R^d}\eta(x)dx\right)\leq C'\frac{\|b\|_{1}^2}{t^{d/2}}.$$
		
		To conclude the proof, we prove that there exists $C>0$ such that for all $t\geq0$, $\sqrt{\int\eta(u(t))(x)}\geq C\|u(t)-w_{0}\|_{2}$. First, we have 
		$$\begin{array}{rcl}\eta(u)(x)&=&\int_0^{p(u(x),x)}(u(x)-w_{p}(x))dp\\&=&\int_0^{p(u(x),x)}(\int_p^{p(u(x),x)}\partial_pw_q(x)dq)dp\\&\geq&\alpha\int_0^{p(u(x),x)}(p(u(x),x)-p)dp\\&=&\alpha\frac{p(u(x),x)^2}{2}.\end{array}$$
		Then, estimate (\ref{upi}) shows that: 
		$$|u-w_0|^2\leq(\sup_p|\partial_pw_p|)^2p(u(x),x)^2\leq C^2p(u(x),x)^2.$$
		This concludes the proof of the theorem.
	\end{proof}
	
	\section{One-dimensional space case: proof of theorem \ref{thm}}\label{oneD}
	In this section, we prove $L^1$-convergence in one space dimension. We bypass the utilisation of Duhamel's formula by counting the number of sign changes. This method is used by Freist\"uhler and Serre in \cite{Freistuhler} to prove that constants are stable in $L^1$ when the flux $f$ does not depend on $x$, and when the space dimension is one. It uses a lemma of Matano \cite{Matano} which gives an evaluation of the number of sign changes for the solution of our equation. The proof is carried out in four steps: (1) At first, we make additional assumptions on $f$ and the initial datum. (2) Then, we prove $L^2$-estimates on $u$ and its primitive $V$ and we deduce that $\|V(t)\|_\infty$ vanishes at $+\infty$. (3) Using lemma of Matano, we find that $\|u(t)\|_1$ is controlled by $\|V(t)\|_\infty$, so we prove the result under the additional hypothesis. (4) We generalized the result without these assumptions.

	\begin{proof}
		First, up to a translation, we will assume that $$p=0,w_{p}\equiv0 \text{ and }f(0,\cdot)\equiv0.$$ We define $F(u,x)=f(u,x)-\partial_{u}f(0,x)u$ which verifies: $F(0,\cdot)\equiv0,$ and $ \partial_{u}F(0,\cdot)\equiv0$ and we deduce the inequality $$F(u,x)\leq \frac{|u|^2}2\sup |\partial_{u}^2F|.$$
		(1) Let us first assume that $b$ is bounded in the following sense: let $$p^+=\min\{p:b\leq w_{p}\}, p^-=\max\{p:b\geq w_{p}\},$$ we assume that $$\max\{\|w_{p^+}\|_{\infty},\|w_{p^-}\|_{\infty}\}<r.$$ Therefore, we have: $|b|<r$ and using the comparison property for all $t$, $|S^tb|<r.$ Moreover, we assume $\|b\|_{1}\sup_{[-r,r]}|\partial_{u}^2F|\leq1$. We will see at the end of the proof how to remove these assumptions.\\
			We further assume that $b\in\mathcal C_{0}^\infty(\mathbb R,[-r,r]),l(b)<\infty$ where $l(b)$ is the number of sign changes of $b$. Actually, we can approximate every function $b$ that verifies the conditions of step 1 by a function in $\mathcal C^\infty_{0}$; and since the support is compact, we can suppose that the sign of the function changes only a finite number of time.
			
		\noindent(2) Assume now that $b$ verifies all the previous assumptions. We define $V(x)=\int_{-\infty}^xu(t,y)dy.$ Since $u$ belongs to $L^1,$ $V$ is well defined and belongs to $L^\infty$ and $\|V\|_{\infty}\leq\|b\|_{1}$. Moreover, since $\int_\mathbb R b=0$ and we have mass conservation, we have that $V\in\mathcal C_0^\infty$. In search of estimates on $V$, we consider an equation verified by $V$:
			\begin{equation}\label{equation V}\partial_{t}V+\partial_{u}f(0,x)\partial_{x}V+F(\partial_{x}V,x)=\partial_{x}^2V.\end{equation}
			Let $\theta:x\mapsto\theta (x)$ from $\mathbb R$ to $\mathbb R$ be a positive function (which will be specified later). Multiplying by $\theta V$ and integrating in space, we obtain:
			$$\frac d{dt}\int\!\frac12\theta V^2+\!\int\!\theta|\partial_{x} V|^2=-\!\int\!\theta VF(\partial_{x}V,x)+\!\int\!\frac{V^2}2\left(\partial_{x}(\theta\partial_{u}f(0,x))\partial_{x}^2\theta\right).$$
			Besides, we have the inequality: $|F(\partial_{x}V,x)|\leq\frac{|\partial_{x}V|^2}2\sup|\partial_{u}^2F|$. We deduce the estimate:
			$$\frac d{dt}\left(\int \theta V^2\right)\leq-\int\theta|\partial_{x}V|^2+\int V^2\left(\partial_{x}(\theta\partial_{u}f(0,x))+\partial_x^2\theta\right).$$
			Now we choose $\theta$ to obtain an estimate on $\int\theta V^2$. We impose:
			\begin{itemize}
				\item $\theta>\alpha>0$ so that $V\mapsto\int\theta V^2$ is a norm on $L^2.$
				\item $\partial_{x}(\theta\partial_{u}f(0,x))+\partial_{x}^2\theta=0$.\\Actually, we only need that $\partial_{x}(\theta\partial_{u}f(0,x))+\partial_{x}^2\theta\leq0.$
			\end{itemize}
			The following lemma ensures the existence of such a $\theta$:
			\begin{lemme}
				There exists $\theta>0$ in $H^1_{per}(Y)$ such that $$\partial_{x}(\theta\partial_{u}f(0,x))+\partial_{x}^2\theta=0.$$
			\end{lemme}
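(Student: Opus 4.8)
The plan is to reduce the second-order equation to a first-order linear ODE and then exhibit an explicit positive periodic solution. Writing $a(x):=\partial_u f(0,x)$, which is $Y$-periodic because $f(\cdot,x)$ is, the equation reads $\partial_x(\partial_x\theta+a\theta)=0$, so it is equivalent to asking for a periodic $\theta$ satisfying $\partial_x\theta+a\theta=c$ for some constant $c$ which is to be produced, not prescribed. The only real issue is to obtain such a $\theta$ that is simultaneously periodic and strictly positive.

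First I would introduce the primitive $A(x)=\int_0^x a(y)\,dy$, so that $A(x+T_1)=A(x)+T_1\langle a\rangle_Y$ by periodicity of $a$, and observe that the naive integrating-factor solution $e^{-A(x)}$ of the homogeneous equation is itself periodic only when $\langle a\rangle_Y=0$. To treat an arbitrary mean in one stroke, I propose the ``sliding-window'' candidate
$$\theta(x)=e^{-A(x)}\int_x^{x+T_1}e^{A(s)}\,ds.$$
Since $a=\partial_uf(0,\cdot)\in\mathcal C^1$, this $\theta$ is $\mathcal C^1$, and it is manifestly strictly positive; being continuous and $T_1$-periodic it is bounded below by a positive constant, which simultaneously supplies the property $\theta>\alpha>0$ needed afterwards, and in particular $\theta\in H^1_{per}(Y)$.

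Then I would verify the two defining properties directly. For periodicity, the change of variables $s\mapsto s+T_1$ together with $A(s+T_1)=A(s)+T_1\langle a\rangle_Y$ gives $\int_{x+T_1}^{x+2T_1}e^{A(s)}\,ds=e^{T_1\langle a\rangle_Y}\int_x^{x+T_1}e^{A(s)}\,ds$, and the prefactor $e^{-A(x+T_1)}=e^{-A(x)}e^{-T_1\langle a\rangle_Y}$ exactly cancels this extra factor, whence $\theta(x+T_1)=\theta(x)$. For the equation, differentiating the product and using $\tfrac{d}{dx}\int_x^{x+T_1}e^{A(s)}\,ds=e^{A(x+T_1)}-e^{A(x)}$ yields $\partial_x\theta+a\theta=e^{A(x+T_1)-A(x)}-1=e^{T_1\langle a\rangle_Y}-1$, a constant. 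Setting $c:=e^{T_1\langle a\rangle_Y}-1$ and applying $\partial_x$ gives $\partial_x(a\theta)+\partial_x^2\theta=0$, which is the asserted identity.

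The only conceptual obstacle is this clash between periodicity and positivity: when $\langle a\rangle_Y\neq0$ the homogeneous solution is not periodic, and one must choose the right constant $c$ in $\partial_x\theta+a\theta=c$ to restore periodicity while keeping $\theta>0$. The sliding-window formula resolves this automatically, selecting $c=e^{T_1\langle a\rangle_Y}-1$ with no case distinction on the sign or vanishing of $\langle a\rangle_Y$, so that beyond these elementary verifications no genuinely hard step remains.
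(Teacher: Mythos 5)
Your proof is correct, and it reaches the lemma by a genuinely different route than the paper. The paper never integrates the ODE: it applies Dalibard's theorem (theorem \ref{thmdalibard}) to the sign-flipped equation $\partial_t w-\partial_x(f(w,x))=\partial_x^2 w$, identifies its mean-zero stationary solution as $\tilde w_0\equiv 0$ by uniqueness (this is where the normalization $f(0,\cdot)\equiv 0$ enters), and takes $\theta=\partial_p\tilde w_p|_{p=0}$, which that theorem guarantees to be $Y$-periodic, in $H^1_{per}(Y)$, in the kernel of $\partial_x^2+\partial_x(\partial_uf(0,x)\,\cdot\,)$, and bounded below by $\alpha>0$. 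Your sliding-window integrating factor $\theta(x)=e^{-A(x)}\int_x^{x+T_1}e^{A(s)}\,ds$ is instead the classical explicit construction of the positive periodic solution of a stationary Fokker--Planck-type equation, and every step checks out: the quasi-periodicity $A(x+T_1)=A(x)+T_1\langle a\rangle_Y$ yields periodicity, the identity $\partial_x\theta+a\theta=e^{T_1\langle a\rangle_Y}-1$ is constant so differentiation gives the equation, and positivity together with continuity and periodicity gives the uniform lower bound $\theta>\alpha>0$ that the subsequent $L^2$-estimate on $V$ actually needs. What your approach buys: it is elementary and self-contained, requires neither Dalibard's existence/uniqueness theorem nor the global hypothesis $\partial_uf\in L^\infty(\mathbb R\times Y)$ underlying the constant $\alpha$ there, never uses the normalization $f(0,\cdot)\equiv0$, and treats all values of $\langle a\rangle_Y$ uniformly with an explicit constant $c$. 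What the paper's approach buys: it exhibits $\theta$ structurally as $\partial_p\tilde w_p|_{p=0}$ for the reversed flux, i.e., as a positive element of the kernel of the relevant linearized operator --- a formulation that makes sense in any space dimension, consistent with the rest of the paper, whereas your quadrature is intrinsically one-dimensional; since the lemma is only invoked for $d=1$, this costs nothing here. (In fact, the space of periodic solutions of $\partial_x(a\theta)+\partial_x^2\theta=0$ in one dimension is one-dimensional, so your $\theta$ coincides with the paper's up to a positive constant factor.)
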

			\begin{proof}
				We focus on the equation:
				$$\partial_{t}w-\partial_{x}(f(w,x))=\partial_{x}^2w.$$
				Theorem \ref{thmdalibard} ensures the existence of a periodic stationary solution $\tilde w_{p}$ of space average $p$ and this one verifies: $\partial_{p}\tilde w_{p}>0.$ Moreover, the function defined by $\theta\equiv\partial_{p}\tilde w_{p}|_{p=0}$ is $Y$-periodic, in $H^1$ and verifies the following equation:
				$$\partial_{x}(\theta\partial_{v}f(\tilde w_{0},x))+\partial_{x}^2\theta=0.$$
				We remark that $\partial_{x}f(0,x)=0=\partial_{x}^20.$ Since $\tilde w_0$ is the unique function such that $\partial_{x}^2\tilde w_{0}=-\partial_{x}f(\tilde w_{0},x)$ and $ \langle \tilde w_{0}\rangle_{Y}=0$, we have $\tilde w_{0}\equiv0$.
			\end{proof}
			The definition of $\theta$ ensures the inequality:
			$$\frac d{dt}\left(\int \theta V^2\right)\leq-\int\theta|\partial_{x}V|^2.$$
			Since $\theta$ belongs to $H^1_{per}(Y)\subset\mathcal C(\mathbb R)$, there exists $c>0$ such that $c<\theta$. Hence, we deduce that $V$ is bounded in $L^2(\mathbb R)$: 
			\begin{equation}\label{V2}
				c\int|V|^2(t)\leq\int\theta|V|^2(t)\leq\int \theta|V|^2(0).
			\end{equation}
			We also have an estimate on $\|u\|_2$. Indeed, we proved in section 4 the dispersion inequality (\ref{dispersion}) for $u$:
			$$\left(\int_{\mathbb R}|u(x,t)|^2dx\right)\leq C_{1}\frac{\|b\|_{1}^2}{t^{1/2}}.$$
			We deduce that 
			\begin{equation}\label{v2}
				\lim_{t\to\infty}\|u(t)\|_{2}=0.
			\end{equation}
			We can now prove an estimate on $\|V\|_\infty$. We have $$V^2(x,t)=2\int_{-\infty}^xu(y,t)V(y,t)dy\leq2\|u(\cdot,t)\|_{2}\|V(\cdot,t)\|_{2}.$$
			From equations (\ref{v2}) and (\ref{V2}), we deduce: $$\lim_{t\to\infty}\|u(\cdot,t)\|_{2}=0,\|V(\cdot,t)\|_{2}\text{ uniformly bounded in }t.$$
			Consequently $\displaystyle\lim_{t\to\infty}\|V(\cdot,t)\|_{\infty}=0.$
			
		\noindent(3) We now need an estimate on the number of sign changes of the solution $u$. To obtain it, we refer to the article of Matano \cite{Matano} in which an estimate on the lap number of a solution of a parabolic problem is proved.\\			
			Let $g:\mathbb R\to\mathbb R$ be a continuous function. We define its \emph{lap number} $l$ as the supremum of $0$ and all $k\in\mathbb N$ with the property that there exist $k+1$ points $x_{0}<\dots<x_{k}$ such that $$\forall 0<i<k, (g(x_{i+1})-g(x_{i}))(g(x_{i})-g(x_{i-1}))<0.$$
			We adapt the lemma of Matano \cite{Matano} to get:
			\begin{lemme}\label{matano}
				For any bounded solution $V:[0,\infty)\times\mathbb R\to\mathbb R$ of (\ref{equation V}):
				$$\partial_{t}V+\partial_{u}f(0,x)\partial_{x}V+F(\partial_{x}V,x)=\partial_{x}^2V$$
				with $V(0,\cdot)\in\mathcal C_{0}^\infty(\mathbb R)$ having a finite lap number, the lap number of $V(t,\cdot)$ is uniformly bounded for all $t\geq0.$
			\end{lemme}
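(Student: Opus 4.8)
The plan is to reduce the statement to the original Matano lap-number theorem, which controls the number of sign changes of solutions to one-dimensional parabolic equations of the form $\partial_t V = \partial_x^2 V + h(x,V,\partial_x V)$, by exhibiting equation (\ref{equation V}) as an instance of that class and checking the structural hypotheses Matano requires (principally that the equation is uniformly parabolic with sufficiently regular coefficients, and that the nonlinearity $h$ has the right monotonicity/boundedness so that no new oscillations are created). Writing $h(x,V,\partial_x V) = -\partial_u f(0,x)\,\partial_x V - F(\partial_x V,x)$, we see that (\ref{equation V}) is exactly such a semilinear parabolic equation with coefficients that are $Y$-periodic in $x$ and smooth, so the main content of the proof is verifying that $h$ meets Matano's regularity assumptions and then invoking his result.

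First I would recall precisely the statement in \cite{Matano}: for a uniformly parabolic equation on $\mathbb R$ (or an interval), the lap number $t\mapsto l(V(t,\cdot))$ of a bounded classical solution is nonincreasing in $t$, hence bounded by its initial value $l(V(0,\cdot))$. The key qualitative fact underlying this is the nodal property for linear parabolic equations: the number of zeros of a solution of $\partial_t z = \partial_x^2 z + a(t,x)\partial_x z + b(t,x)z$ cannot increase in time. To apply this to our nonlinear equation, I would argue that $V(t,\cdot) - V(t,\cdot)$ shifted appropriately, or more directly the difference of $V$ and a reference level, satisfies a linear equation of this form with the coefficients obtained by freezing the nonlinearity along the solution; the boundedness of the solution (given in the hypothesis) together with the smoothness of $f$ guarantees that the frozen coefficients $a,b$ are bounded and the linear theory applies.

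Second, I would handle the two features specific to this setting. The coefficient $\partial_u f(0,x)$ is bounded since $\partial_u f \in L^\infty$ (indeed it is smooth and $Y$-periodic), so the first-order term is harmless. The nonlinear term $F(\partial_x V,x)$ is controlled by the estimate $F(u,x)\le \tfrac{|u|^2}{2}\sup|\partial_u^2 F|$ established at the start of the section, and more importantly $F$ is $\mathcal C^1$ in its first argument with $F(0,\cdot)\equiv 0$ and $\partial_u F(0,\cdot)\equiv 0$; this smoothness is what lets us linearize and place the equation in Matano's framework with locally Lipschitz nonlinearity. The boundedness of $V$ confines $\partial_x V$ to a region on which all relevant quantities are controlled.

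The main obstacle I expect is transcription rather than conceptual: Matano's lemma is stated for problems on a bounded interval (or with specified boundary behaviour), whereas our $V$ lives on all of $\mathbb R$. The hypothesis $V(0,\cdot)\in\mathcal C_0^\infty(\mathbb R)$ with finite lap number is precisely what makes this transfer possible — since $V(0,\cdot)$ and (by the $L^\infty$-decay of $V$ proved in step (2), together with mass conservation) the solution decay at spatial infinity, all sign changes occur in a bounded region, so one can either apply Matano on a large enough interval containing all oscillations for all time, or appeal to the version of the nodal-zero argument valid on $\mathbb R$ for solutions with prescribed decay. Making this localization rigorous — verifying that oscillations cannot escape to infinity — is the step requiring care; once it is in place, the nonincrease of the lap number gives the uniform bound $l(V(t,\cdot))\le l(V(0,\cdot))<\infty$ for all $t\ge 0$, which is the assertion.
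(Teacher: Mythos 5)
Your overall strategy --- reduce Lemma \ref{matano} to Matano's theorem via the Sturmian nonincrease of sign changes --- is the same as the paper's, but you misidentify the crux, and this is a genuine gap. The paper's entire proof consists of one observation: since $F(0,\cdot)\equiv 0$ (a consequence of the normalization $w_0\equiv 0$, $f(0,\cdot)\equiv 0$ made at the start of the section), one can factor
$$F(\partial_x V,x)=\tilde F(\partial_x V,x)\,\partial_x V,\qquad \tilde F(p,x)=\int_0^1\partial_u F(sp,x)\,ds,$$
with $\tilde F$ bounded on the (bounded) range of $\partial_x V$, so that (\ref{equation V}) takes the form $\partial_t V+b(t,x)\,\partial_x V=\partial_x^2 V$: the whole nonlinearity is proportional to $\partial_x V$, constants are solutions, and the equation satisfied by $u=\partial_x V$ has no source term. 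That \emph{structural} fact --- not regularity --- is what the lap-number theorem needs. You list the relevant ingredients ($F(0,\cdot)\equiv 0$, $\partial_u F(0,\cdot)\equiv 0$, boundedness of $\partial_u f(0,\cdot)$) but never assemble them; instead you assert that ``the main content of the proof is verifying that $h$ meets Matano's regularity assumptions'' and attribute the linearization to smoothness. That claim is false at the level of generality you invoke: for $\partial_t V=\partial_x^2 V+g(x)$ with $g$ smooth, periodic, nonconstant --- uniformly parabolic with perfectly regular, bounded coefficients --- a solution issued from monotone data immediately develops laps tracking the oscillations of $g$, and its lap number is not bounded by the initial one. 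The explicit $x$-dependence of the nonlinearity is precisely the threat in this inhomogeneous setting, and the factorization is the one-line but indispensable verification that it is harmless; note in particular that when one differentiates the equation, the dangerous term $(\partial_x F)(\partial_x V,x)$ is again proportional to $\partial_x V$ only because $F(0,x)=0$ for \emph{all} $x$.

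Two smaller points. Your sentence about ``$V(t,\cdot)-V(t,\cdot)$ shifted appropriately'' is garbled; the workable version is $W=V-c$ for a level $c$, and $W\equiv 0$ solves the resulting frozen-coefficient linear equation again only thanks to $F(0,\cdot)\equiv 0$, i.e., the same factorization you skipped. And your localization to a large interval is unjustified as written: $V(t,\cdot)$ loses compact support instantly, and nothing you cite confines all sign changes to a fixed compact set uniformly in time. The correct route, as in \cite{Freistuhler}, is to use the version of the zero/lap-number nonincrease valid for bounded solutions on the whole line, which requires exactly the bounded-drift form $\partial_t V+b(t,x)\,\partial_x V=\partial_x^2 V$ obtained above.
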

			To do that, we just have to notice that $F(\partial_{x}V,x)=\tilde F(\partial_{x}V,x)\partial_{x}V$ with $\tilde F(\partial_{x}V,x)$. \\
			Since the number of sign changes of $b$ is finite, $V(0,x)$ has a finite lap number. The lemma of Matano proves that $\forall t,\exists\xi_{1}^t,\dots,\xi_{m}^t$ such that $V$ is monotone on $]-\infty=\xi_{0}^t;\xi_{1}^t[,\dots,$ $]\xi_{m}^t;\xi_{m+1}^t=\infty[.$ Therefore, the sign of $u$ does not change on the same intervals.
			We deduce: $$\begin{array}{rcl}\|u(\cdot,t)\|_{1}=\sum_{i=0}^m\left|\int_{\xi_{i}^t}^{\xi_{i+1}^t}u(x,t)dx\right|&=&\sum_{i=0}^m|V(\xi_{i+1}^t,t)-V(\xi_{i}^t,t)|\\&\leq &2(m+1)\|V(t)\|_{\infty}\to0.\end{array}$$
			Therefore the theorem is proved under the assumptions:
			$$\max\{\|w_{p^+}\|_{\infty},\|w_{p^-}\|_{\infty}\}<r,\quad\|b\|_{1}\sup_{[-r,r]}|\partial_{u}^2F|\leq1$$with$$p^+=\min\{p:b\leq w_{p}\},\quad p^-=\max\{p:b\geq w_{p}\}.$$

		\noindent(4) Next, we show how to remove these assumptions. We define
			$$A_{p}=\big\{b\in L^1(\mathbb R):\int_{-\infty}^\infty b=0 \text{ et }\forall x,w_{-p}(x)\leq b(x)\leq w_{p}(x)\big\}.$$
			We note $M_{p}=\max\{\|w_{-p}\|_{\infty},\|w_{p}\|_{\infty}\}.$ Hence, we have $$\sup_{[-M_{p},M_{p}]}|\partial_{u}^2F|<\infty.$$\\
			Let now $b\in A_{p}$ et $n=2\|b\|_{1}\sup_{[-M_{p},M_{p}]} |\partial_{v}^2F|.$ Using $w_{-p}\leq0\leq w_{p}$, we have $b/n\in A_{p}$ et $\forall k\in\{1,\dots,n\}, \frac{kb}n\in A_{p}$. The properties of the nonlinear semigroup show that $A_{p}$ is stable under $S^t,$  so we have for all $t, S^t(\frac{kb}n)\in A_{p}.$\\			
			By induction on $k$, we can prove the theorem for $\frac{kb}n.$ Let $P_{k}$ the property:
			$$P_{k}:\lim_{t\to\infty}\Big\|S^t\Big(\frac{kb}n\Big)\Big\|_{1}=0$$
			$P_{1}$: We have $b/n\in A_{p}, \|\frac bn\|_{1}\sup_{[-M_{p},M_{p}]} |\partial_{u}^2F|=\frac12<1.$ Using step 3, we deduce: $\lim_{t\to\infty}\Big\|S^t\Big(\frac{b}n\Big)\Big\|_{1}=0$.\\
			$P_k$: Let assume that $P_{k}$ with $k< n$ is true and let prove $P_{k+1}$. We have $S^t((k+1)\frac bn)\in A_{p}$. Moreover, the $L^1$-contraction property gives:
			$$\left\|S^t\left((k+1)\frac bn\right)-S^t\left(\frac{kb}n\right)\right\|_{1}\leq\left\|\frac bn\right\|_{1}.$$
			We deduce: $$\|S^t((k+1)\frac bn)\|_{1}\leq\|S^t(\frac{kb}n)\|_{1}+\|\frac bn\|_{1}.$$ Since $$\displaystyle\lim_{t\to\infty}\left\|S^t\left(\frac {kb}n\right)\right\|_{1}=0,$$ we have $$\|S^t((k+1)\frac bn)\|_{1}\sup_{[-M_{p},M_{p}]} |\partial_{u}^2F|<1$$ for $t$ large enough. Furthermore, $S^t((k+1)\frac bn)\in A_{p}.$ Hence, we can use the conclusion of step 3 again to conclude the proof.
	 \end{proof}
	 
	\section{Perspectives}
	
	In this paper we have proved the $L^1$-stability of the periodic stationary solutions of (\ref{equation}) in the one-dimensional space case. The proof uses a dispersion inequality which is also verified in the multidimension space case and the lemma of Matano (lemma \ref{matano}) about the number of sign changes of the solution of (\ref{equation}). But in the multidimension space case, the lemma of Matano has no more sense. An idea to bypass it is to use Duhamel's formula, as done by Serre in \cite{Serre}. In this purpose, we consider the linearized operator $L=\Delta-\dive(\partial_uf(0,x)\cdot)$, and we write the equation in the form: $$(\partial_t-L)u=-\dive(F(u,x))$$ with $F(u,x)=f(u,x)-\partial_{u}f(0,x)u$. We note $\tilde K^t$ the kernel of the operator $L$ so that we obtain Duhamel's formula: 
	$$u(t)=\tilde K^t*b-\int_{0}^t\nabla_{x}\tilde K^{t-s}*F(u(s,\cdot),\cdot)ds.$$
	Taking $L^1$-norms: 
	\begin{equation}\label{Duhamel}u(t)\leq\|\tilde K^t*b\|_1+\int_{0}^t\|\nabla_{x}\tilde K^{t-s}\|_1\|F(u(s,\cdot),\cdot)\|_1ds.\end{equation}
	Moreover, we have $\partial_{u}F(0,\cdot)\equiv0$, so we obtain $|F(u,\cdot)|\leq|u|^2$. Hence, dispersion inequality (\ref{dispersion}) gives: $$\|F(u(s,\cdot),\cdot)\|_1\leq C_d^2\frac{\|b\|_1^2}{s^{d/2}}.$$

	To obtain an $L^1$-convergence theorem similar to theorem \ref{thm}, we can use estimates on the kernel $\tilde K^t$ and its derivative $\nabla_x\tilde K^t$. Some results on this kernel are given by Oh and Zumbrun in \cite{Oh1} and \cite{Oh} when the space dimension is one. When the space dimension $d$ is larger than 2, we can refer to \cite{Oh3} and \cite{Oh4} in which they obtain large-time estimates in $L^q$ where $q\geq2$, and when $f$ is periodic in only one direction. But, until now, we have not large-time $L^1$-estimates for $d\geq2$.

	To conclude, we can see how estimates can give a theorem: if we obtain suitable estimates, we can bound all the term in (\ref{Duhamel}) by $\|b\|_1^2$ as in \cite{Serre} and conclude as Serre does by continuity of the limit: $l_0(b)=\displaystyle\lim_{t\to\infty}\|S^tb\|_1.$

	\paragraph*{Acknowledgement.}  The author wishes to thanks Denis Serre for suggesting the problem and Pascal Noble for useful comments on the manuscript.
		
	\bibliographystyle{plain}
	\bibliography{biblio}

\end{document}